\newif \ifDdP \DdPtrue
\newcommand{\email}[1]{\href{mailto:#1}{#1}}
\newcommand{\vel}{\vec{\beta}}
\newcommand{\diff}[1][]{\nu_{#1}}
\newcommand{\ldiff}{\underline{\nu}}
\newcommand{\reac}{\mu}
\newcommand{\Pe}{\mathrm{Pe}}
\newcommand{\Da}{\mathrm{Da}}
\newcommand{\flux}[1][]{\vec{\Phi}_{#1}}
\newcommand{\Gam}{\Gamma_{\diff,\vel}}
\newcommand{\Intnu}[1][]{{\cal I}_{\diff}^{#1}}
\newcommand{\Int}[1][]{{\cal I}_{\diff,\vel}^{#1}}
\newcommand{\Tdiff}[1][F]{T_{\diff}(#1)}
\newcommand{\UT}{\underline{\mathsf{U}}_T^k}
\newcommand{\Uh}[1][]{\underline{\mathsf{U}}_{h#1}^k}
\newcommand{\Uhz}[1][]{\underline{\mathsf{U}}_{h#1}^0}
\newcommand{\su}[1][T]{\underline{\mathsf{u}}_{#1}}
\newcommand{\sv}[1][T]{\underline{\mathsf{v}}_{#1}}
\newcommand{\sw}[1][T]{\underline{\mathsf{w}}_{#1}}
\newcommand{\sz}[1][T]{\underline{\mathsf{z}}_{#1}}
\newcommand{\unu}[1][]{\mathsf{u}_{#1}}
\newcommand{\unv}[1][]{\mathsf{v}_{#1}}
\newcommand{\unw}[1][]{\mathsf{w}_{#1}}
\newcommand{\unz}[1][]{\mathsf{z}_{#1}}
\newcommand{\shu}[1][]{\underline{\widehat{\mathsf{u}}}_{#1}}
\newcommand{\unhu}[1][T]{\widehat{\mathsf{u}}_{#1}}
\newcommand{\cu}[1][T]{\widecheck{u}_{#1}}
\newcommand{\pT}[1][k]{p_T^{#1}}
\newcommand{\pTF}[1][k]{p_{T(F)}^{#1}}
\newcommand{\PT}[1][k]{P_T^{#1}}
\newcommand{\GBT}[1][k]{G_{\vel,T}^{#1}}
\newcommand{\velc}[1][T]{\beta_{{\rm ref},#1}}
\newcommand{\tc}[1][T]{\tau_{{\rm ref},#1}}
\newcommand{\Lvel}[1][T]{L_{\vel,#1}}
\newcommand{\lproj}[2][h]{\pi_{#1}^{#2}}
\newcommand{\IT}[1][k]{\underline{\mathsf{I}}_T^{#1}}
\newcommand{\Ih}[1][k]{\underline{\mathsf{I}}_h^{#1}}
\newcommand{\ppos}{^{+}}
\newcommand{\pneg}{^{-}}
\newcommand{\jump}[1]{[\![#1]\!]}
\newcommand{\cf}{\mathbf{1}}
\newcommand{\Ap}{A^+}
\newcommand{\Am}{A^-}
\newcommand{\Apm}{A^{\pm}}
\newcommand{\Aabs}{|A|}
\newcommand{\ASSUM}[2]{(\textsf{#1#2})}
\newcommand{\bigtimes}{\times}
\newcommand{\widecheck}{\check}
\definecolor{violet}{rgb}{0.580,0.,0.827}
\title{A discontinuous-skeletal method for advection-diffusion-reaction on general meshes}
\author[1]{Daniele A. Di Pietro\footnote{\email{daniele.di-pietro@umontpellier.fr}, corresponding author}}
\affil[1]{University of Montpellier, Institut Montpelli\'{e}rain Alexander Grothendieck, 34095 Montpellier, France}
\author[2]{J\'{e}r\^{o}me Droniou\footnote{\email{jerome.droniou@monash.edu}}}
\affil[2]{School of Mathematical Sciences, Monash University, Victoria 3800, Australia}
\author[3]{Alexandre Ern\footnote{\email{ern@cermics.enpc.fr}}}
\affil[3]{University Paris-Est, CERMICS (ENPC), 
  6--8 avenue Blaise Pascal,
  77455 Marne-la-Vall\'ee CEDEX 2, France
}
\begin{document}

\maketitle

\begin{abstract}
We design and analyze an approximation method for advection-diffusion-reaction equations where the (generalized) degrees of freedom are polynomials of order $k\ge0$ at mesh faces. The method hinges on local discrete reconstruction operators for the diffusive and advective derivatives and a weak enforcement of boundary conditions. Fairly general meshes with polytopal and nonmatching cells are supported. Arbitrary polynomial orders can be considered, including the case $k=0$, which is closely related to Mimetic Finite Difference/Mixed-Hybrid Finite Volume methods. The error analysis covers the full range of P\'eclet numbers, including the delicate case of local degeneracy where diffusion vanishes on a strict subset of the domain. Computational costs remain moderate since the use of face unknowns leads to a compact stencil with reduced communications. Numerical results are presented.
\\
\noindent{\it 2010 Mathematics Subject Classification:} 65N30, 65N08, 65N12, 65N15 \\
\noindent{\it Keywords.} advection-diffusion, P\'eclet robustness, Hybrid High-Order method, degenerate diffusion, error estimates.
\end{abstract}

\section{Introduction}

The goal of the present work is to design and analyze an approximation method for advection-diffusion-reaction equations where the (generalized) degrees of freedom (DOFs) are polynomials of order $k\ge0$ at mesh faces. Since such faces constitute the mesh skeleton, and since DOFs can be chosen independently at each face, we use the terminology discontinuous-skeletal method. The proposed method offers various assets: (i) Fairly general meshes, with polytopal and nonmatching cells, are supported; (ii) Arbitrary polynomial orders, including the case $k=0$, can be considered; (iii) The error analysis covers the full range of P\'eclet numbers; (iv) Computational costs remain moderate since skeletal DOFs lead to a compact stencil with reduced communications. 

Approximation methods using face-based DOFs have been investigated recently for advection-diffusion equations on meshes composed of standard elements.
In~\cite{Cockburn.Dong.ea:09}, Cockburn et al. devise and numerically investigate a Hybridizable Discontinuous Galerkin (HDG)  method for the diffusion-dominated regime based on a mixed formulation where an approximation for the total advective-diffusive flux is sought.
In~\cite{Chen.Cockburn:14}, Chen and Cockburn carry out a convergence analysis for a variable degree HDG method on semimatching nonconforming simplicial meshes, and investigate the impact of mesh nonconformity on the supercloseness of the potential. The formulation differs from~\cite{Cockburn.Dong.ea:09} in that the flux variable now approximates the diffusive component only.
In~\cite{Wang.Ye:13}, Wang and Ye analyze a Weak Galerkin method for advection-diffusion-reaction on triangular meshes, which appears to be mainly tailored to the diffusion-dominated case.
Turning to low-order methods on general polyhedral meshes, we cite, in particular, the work of Beir\~{a}o da Veiga, Droniou, and Manzini~\cite{Beirao-da-Veiga.Droniou.ea:10} on Hybrid Mimetic Mixed (HMM) methods (which encompass, see~\cite{Droniou.Eymard.ea:10}, three families of numerical schemes for elliptic equations:
the Mimetic Finite Difference method \cite{Brezzi.Lipnikov.ea:05}, the Mixed Finite Volume method
\cite{Droniou.Eymard:06}, and the Hybrid Finite Volume -- or SUSHI -- method
\cite{Eymard.Gallouet.ea:10}).
Although the analysis focuses on the diffusion-dominated case, we show here that a suitable tweaking of the scheme so as to include weakly enforced boundary conditions allows one to treat the advection-dominated case as well.%

The starting point for the present discontinuous-skeletal method is the Hybrid-High Order (HHO) method designed in~\cite{Di-Pietro.Ern.ea:14,Di-Pietro.Ern:14a} for purely diffusive and linear elasticity problems. The key ideas in~\cite{Di-Pietro.Ern.ea:14,Di-Pietro.Ern:14a} are as follows: (i) In each mesh cell, a local potential reconstruction of order $(k+1)$ is devised from polynomials of order $k$ in the cell and on its faces (cell- and face-based DOFs); (ii) A local bilinear form is built using a Galerkin form based on the gradient of the local potential reconstruction plus a stabilization form which preserves the improved order of the reconstruction; this leads to energy-error estimates of order $(k+1)$ and $L^2$-potential estimates of order $(k+2)$ if elliptic regularity holds; (iii) The global discrete problem is assembled cellwise, and cell-based DOFs are eliminated by static condensation, so that only the face-based DOFs remain. 

The extension to advection-diffusion-reaction equations entails several new ideas: (i) We devise a local reconstruction of the advective derivative from cell- and face-based DOFs using an integration by parts formula; (ii) Stability for the advective contribution is ensured by terms that penalize the difference between cell- and face-based DOFs at faces, and which therefore do not preclude the possibility of performing static condensation and do not enlarge the stencil; as in~\cite{Beirao-da-Veiga.Droniou.ea:10}, the stability terms are formulated in a rather general form so as to include various approaches used in the literature, e.g., upwind, locally $\theta$-upwind, and Scharfetter--Gummel schemes; (iii) Boundary conditions are enforced weakly so as to achieve robustness in the full range of P\'eclet numbers. 

An additional novel feature of the present work is that our analysis also includes the case of locally degenerate advection-diffusion-reaction equations, where the diffusion coefficient vanishes on a (strict) subset of the computational domain. We emphasize that such problems are particularly delicate since the exact solution can jump at the diffusive/nondiffusive interface separating zero and nonzero regions for the diffusion coefficient. The literature on locally degenerate advection-diffusion-reaction problems is relatively scarce.
The coupling of parabolic-hyperbolic systems in one space dimension is considered by Gastaldi and Quarteroni~\cite{Gastaldi.Quarteroni:89}; see also~\cite{Ern.Proft:06}.
In both cases, ad hoc techniques are proposed based on removing suitable terms at the diffusive/nondiffusive interface.
In~\cite{Di-Pietro.Ern.ea:08}, a discontinuous Galerkin (dG) method is designed and analyzed, where the use of weighted averages allows one to automatically handle the possibility of jumps in the exact solution. A dG method handling the degenerate case is also considered numerically by Houston, Schwab, and S\"{u}li~\cite{Houston.Schwab.ea:02}. In the present setting, a -- perhaps surprising -- result at first sight is that an approximation method hinging on face-based DOFs can capture well a discontinuous solution. This result is achieved owing to a tailored design of the stabilization terms ensuring that the interface unknown approximates well the exact solution from the diffusive side.
 
The material is organized as follows.
In Section~\ref{sec:cont}, we present the model problem.
In Section~\ref{sec:setting}, we introduce the discrete setting. 
In Section~\ref{sec:disc.op}, we define the local bilinear forms and introduce the novel ideas to discretize the advective terms.  
In Section~\ref{sec:method}, we build the discrete problem, introduce several norms for the analysis, and state our main results on stability and error estimates. The dependence on the physical parameters is tracked in the error estimate so as to capture the variation in convergence order between the diffusive and the advective regimes. We also study the link with the HMM methods of~\cite{Beirao-da-Veiga.Droniou.ea:10} in the case $k=0$. Such a link was already noticed in \cite{Di-Pietro.Ern.ea:14} for HHO methods in the purely diffusive case.
Numerical results on standard and general polygonal meshes are presented to assess the sharpness of the error estimate in both the uniformly vanishing diffusion and locally degenerate cases.
Finally, in Section~\ref{sec:analysis}, we prove our main results.  

\section{Model problem}
\label{sec:cont}

Let $\Omega\subset\Real^d$, $d\ge 1$, be an open bounded connected polytope of boundary $\partial\Omega$ and unit outer normal $\normal$.
We denote by $\diff:\Omega\to\Real^+$ the diffusion coefficient, which we assume to be piecewise constant on a partition $P_\Omega\eqbydef\{\Omega_i\}_{1\le i\le N_\Omega}$ of $\Omega$ into polytopes and such that $\diff\ge\ldiff\ge 0$ almost everywhere in $\Omega$. The case of locally heterogeneous anisotropic diffusion can be considered as well using the ideas in~\cite{Di-Pietro.Ern:14b}.
For the advective velocity $\vel:\Omega\to\Real^d$, we assume the regularity $\vel\in{\rm Lip}(\Omega)^d$, and for the reaction coefficient $\mu:\Omega\to\Real$, we assume $\mu\in L^\infty(\Omega)$ and that $\mu$ is bounded from below by a real number $\mu_0>0$. For simplicity, we work under the assumption $\DIV\vel\equiv0$; the case $\DIV\vel\not\equiv 0$ can be treated similarly, provided $\reac+\frac12\DIV\vel\ge\reac_0>0$.
At the continuous level with non-degenerate diffusion, this assumption can be relaxed
to $\reac\ge 0$ (no condition on $\DIV\vel$), see \cite{Droniou:02}; the locally degenerate case and the analysis of discretization schemes is, however, more delicate. Some numerical tests (not reported here) with $\reac_0=0$ indicate that the present scheme remains well-behaved.
We introduce the following sets (cf. Figure~\ref{fig:dpeg} below for an illustration):
\begin{subequations}
\begin{align}
  \label{eq:Gamma}
  \Gam &\eqbydef\left\{
  \vec{x}\in\partial\Omega\st\text{$\diff(\vec{x})>0$ or $(\vel\SCAL\normal)(\vec{x})<0$}
  \right\}, \\
  \Int[\pm] &\eqbydef \{ \vec{x} \in \Intnu \st \pm (\vel\SCAL\normal_I)(\vec{x}) > 0 \},
\end{align}
\end{subequations}
where $\Intnu$ is the diffusive/nondiffusive interface and $\normal_I$ is the unit normal to $\Intnu$ pointing out of the diffusive region. More precisely, $\Intnu$ is the set of points in $\Omega$ located at an interface between two distinct subdomains $\Omega_i$ and $\Omega_j$ of $P_\Omega$ such that $\restrto{\diff}{\Omega_i}>\restrto{\diff}{\Omega_j}=0$. We assume that 
\begin{equation*} \label{eq:hyp.beta.cross}
\text{$(\vel\SCAL\normal_I)(\vec{x})\ne0$ for a.e. $\vec{x}\in\Intnu$}.
\end{equation*}
For given source term $f\in L^2(\Omega)$ and boundary datum $g\in L^2(\Gam)$, the continuous problem reads
\begin{subequations}
  \label{eq:strong}
  \begin{alignat}{2}
    \label{eq:strong:PDE}
    \DIV(-\diff\GRAD u + \vel u) + \reac u &= f &\qquad& \text{in $\Omega\setminus\Intnu$,} 
    \\
    \label{eq:match.flux}
    \jump{-\diff\GRAD u + \vel u}\SCAL\normal_I&=0 &\quad&\text{on $\Intnu$},
    \\
    \label{eq:match.u}
    \jump{u}&=0 &\quad&\text{on $\Int[+]$},
    \\
    \label{eq:strong:BC}
    u &= g &\qquad& \text{on $\Gam$,}
  \end{alignat}
\end{subequations}
where $\jump{\cdot}$ denotes the jump across $\Intnu$ (the sign is irrelevant). Notice that the boundary condition is enforced at portions of the boundary touching a diffusive region or a nondiffusive region provided the advective field flows into the domain. 
A weak formulation for~\eqref{eq:strong} has been analyzed in~\cite{Di-Pietro.Ern.ea:08}. In the non-degenerate case $\ldiff>0$, $\Gam=\partial\Omega$ and the usual weak formulation in the space $H_0^1(\Omega)$ holds, cf., e.g.,~\cite[Section~4.6.1]{Di-Pietro.Ern:12}.%

\section{Discrete setting}
\label{sec:setting}
This section presents the discrete setting: admissible mesh sequences, analysis tools on such meshes, DOFs, reduction maps, and reconstruction operators.

\subsection{Assumptions on the mesh}

Denote by ${\cal H}\subset \Real_*^+ $ a countable set of meshsizes having $0$ as its unique accumulation point.
Following~\cite[Chapter~4]{Di-Pietro.Ern:12}, we consider $h$-refined mesh sequences $(\Th)_{h \in {\cal H}}$ where, for all $ h \in {\cal H} $, $\Th$ is a finite collection of nonempty disjoint open polyhedral elements $T$
such that $\closure{\Omega}=\bigcup_{T\in\Th}\closure{T}$ and $h=\max_{T\in\Th} h_T$
with $h_T$ standing for the diameter of the element $T$.
A face $F$ is defined as a hyperplanar closed connected subset of $\closure{\Omega}$ with positive $ (d{-}1) $-dimensional Hausdorff measure and such that
\begin{inparaenum}[(i)]
  \item either there exist $T_1(F),T_2(F)\in\Th $ such that $F\subset\partial T_1(F)\cap\partial T_2(F)$ and $F$ is called an interface or 
  \item there exists $T(F)\in\Th$ such that $F\subset\partial T(F)\cap\partial\Omega$ and $F$ is called a boundary face.
\end{inparaenum}
In what follows, the dependence on $F$ of $T_1(F)$ and $T_2(F)$ (when $F$ is an interface) and of $T(F)$ (when $F$ is a boundary face) is omitted when no ambiguity can arise.
Interfaces are collected in the set $\Fhi$, boundary faces are collected in $\Fhb$, and we let $\Fh\eqbydef\Fhi\cup\Fhb$.
The diameter of a face $F\in\Fh$ is denoted by $h_F$.
For all $T\in\Th$, $\Fh[T]\eqbydef\{F\in\Fh\st F\subset\partial T\}$ denotes the set of faces contained in $\partial T$ (with $\partial T$ denoting the boundary of $T$) and, for all $F\in\Fh[T]$, $\normal_{TF}$ is the unit normal to $F$ pointing out of $T$.
Symmetrically, for all $F\in\Fh$, we let $\Th[F]\eqbydef\{T\in\Th\st F\subset \partial T\}$ be
the set of elements having $F$ as a face.
For each interface $F\in\Fhi$, we fix an orientation as follows: we select a fixed ordering for the elements $T_1,T_2\in\Th$ such that $F\subset\partial T_1\cap\partial T_2$ and we let $\normal_F\eqbydef\normal_{T_1,F}$. 
For a boundary face, we simply take $\normal_F=\normal$, the outward unit normal to $\Omega$.

Our analysis hinges on the following two assumptions on the mesh sequence. 

\begin{assumption}[Admissible mesh sequence]
  \label{def:adm.Th}
   For all $h\in{\cal H}$, $\Th$ admits a matching simplicial submesh $\fTh$ such that any cell and any face in $\fTh$ belongs to only one cell and face of $\Th$, respectively, and there exists a real number $\varrho>0$ independent of $h$ such that, for all $h\in{\cal H}$,
  \begin{inparaenum}[(i)]
  \item for all simplex $S\in\fTh$ of diameter $h_S$ and inradius $r_S$, $\varrho h_S\le r_S$ and
  \item for all $T\in\Th$, and all $S\in\fTh$ such that $S\subset T$, $\varrho h_T \le h_S$.
  \end{inparaenum}
\end{assumption}

\begin{assumption}[Compatible mesh sequence] \label{def:comp.Th}
  \begin{inparaenum}[(i)]
  \item Any mesh cell belongs to one and only one subdomain $\Omega_i$ of the partition $P_\Omega$;
  \item Any mesh face having an intersection with the interface $\Int$ (of positive $(d-1)$-dimensional Hausdorff measure) is included in one of the two sets $\Int[\pm]$;
  \item In any mesh face such that the diffusion coefficient vanishes on both of its sides, the normal component of $\vel$ is nonzero in a subset of positive measure. 
  \end{inparaenum}
\end{assumption}

The simplicial submesh in Assumption~\ref{def:adm.Th} is just a theoretical tool used to prove the results in Section~\ref{sec:setting:tools}, and it is not used in the actual construction of the discretization method. Furthermore, 
a straightforward consequence of Assumption~\ref{def:comp.Th}(i) is that $\diff$ is piecewise constant on $\Th$. Assumption~\ref{def:comp.Th}(ii) is important in the error analysis so that the face unknowns on $\Int$ capture the exact solution from the diffusive side. In practice, this assumption is not restrictive since the faces of the original mesh can be split to satisfy Assumption~\ref{def:comp.Th}(ii).
Assumption~\ref{def:comp.Th}(iii) can be avoided by adding some crosswind diffusion to the stabilization of the advective-reactive bilinear form in the spirit of a Lax--Friedrichs flux, so that the difference between cell- and face-based DOFs is always penalized on faces included in the nondiffusive region.

\subsection{Analysis tools}
\label{sec:setting:tools}

We recall some results that hold uniformly in $h$ on admissible mesh sequences.
In what follows, for $X\subset\closure{\Omega}$, we denote by ${(\cdot,\cdot)}_X$ and $\norm[X]{{\cdot}}$ the standard inner product and norm in $L^2(X)$, respectively, with the convention that the subscript is omitted whenever $X=\Omega$. The same notation is used in the vector-valued case $L^2(X)^d$.
According to~\cite[Lemma 1.42]{Di-Pietro.Ern:12}, for all $h\in{\cal H}$, all $T\in\Th$, and all $F\in\Fh[T]$, $h_F$ is comparable to $h_T$ in the sense that
\begin{equation}
  \label{eq:hF}
  \varrho^2 h_T\leq h_F\leq h_T.
\end{equation}
Moreover, \cite[Lemma 1.41]{Di-Pietro.Ern:12} shows that there exists an integer $\Np$ depending on $\varrho$ such that
\begin{equation}
  \label{eq:Np}
  \forall h\in{\cal H},\qquad
  \max_{T\in\Th}\card{\Fh[T]}\le\Np.
\end{equation}
Let $l\ge0$ be a nonnegative integer. 
For an $n$-dimensional subset $X$ of $\closure{\Omega}$ ($n\leq d$), $\Poly[n]{l}(X)$ is the space spanned by the restrictions to $X$ of $n$-variate polynomials of total degree $\le l$. 
Then, there exists a real number $C_{\rm tr}$ depending on $\varrho$ and $l$, but independent of $h$, such that the following discrete trace inequality holds for all $T\in\Th$ and $F\in\Fh[T]$, cf.~\cite[Lemma~1.46]{Di-Pietro.Ern:12}:
\begin{equation}
  \label{eq:trace.disc}
  \norm[F]{v} \le 
  C_{\rm tr} h_F^{- \nicefrac12} \norm[T]{v}
  \qquad\forall v\in\Poly{l}(T).
\end{equation}
Furthermore, the following inverse inequality holds for all $T\in\Th$ with $C_{\rm inv}$ again depending
on $\varrho$ and $l$, but independent of $h$,
cf.~\cite[Lemma~1.44]{Di-Pietro.Ern:12}, 
\begin{equation}
  \label{eq:inv}
  \norm[T]{\GRAD v}\le 
  C_{\rm inv} h_T^{-1}\norm[T]{v}
  \qquad\forall v\in\Poly{l}(T).
\end{equation}
Moreover, using~\cite[Lemma~1.40]{Di-Pietro.Ern:12} together with the results of~\cite{Dupont.Scott:80}, one can prove that there exists a real number $C_{\rm app}$ depending on $\varrho$ and $l$, but independent of $h$, such that, for all $T\in\Th$, denoting by $\lproj[T]{l}$ the $L^2$-orthogonal projector on $\Poly{l}(T)$, the following holds:
For all $s\in\{1,\ldots,l+1\}$ and all $v\in H^s(T)$, 
\begin{equation}
  \label{eq:approx.lproj.T}
  \seminorm[H^m(T)]{v - \lproj[T]{l} v }
  + h_T^{\nicefrac12}\seminorm[H^m(\partial T)]{v-\lproj[T]{l} v}
  \le 
  C_{\rm app} h_T^{s-m} 
  \seminorm[H^s(T)]{v}
  \qquad \forall m \in \{0,\ldots,s-1\}.
\end{equation}

\subsection{Degrees of freedom, interpolation, and reconstruction}
\label{sec:DOFs}

Let a polynomial degree $k\ge 0$ be fixed.
For all $T\in\Th$, the local space of DOFs is
\begin{equation*}
  \label{eq:UT}
  \UT\eqbydef\Poly{k}(T)\times\left\{
  \bigtimes_{F\in\Fh[T]}\Poly[d-1]{k}(F)
  \right\},
\end{equation*}
and we use the notation $\sv=(\unv[T],(\unv[F])_{F\in\Fh[T]})$ for a generic element $\sv\in\UT$.
We define the local interpolation operator $\IT: H^1(T)\to\UT$ such that, for all $v\in H^1(T)$,
\begin{equation*}
  \label{eq:IT}
  \IT v \eqbydef \left(\lproj[T]{k}v, (\lproj[F]{k}v)_{F\in\Fh[T]}\right),
\end{equation*}
where $\lproj[F]{k}$ denotes the $L^2$-orthogonal projector on $\Poly[d-1]{k}(F)$.
Following~\cite{Di-Pietro.Ern.ea:14}, for all $T\in\Th$, we define the local potential reconstruction operator $\pT:\UT\to\Poly{k+1}(T)$ such that, for all $\sv\eqbydef(\unv[T],(\unv[F])_{F\in\Fh[T]})\in\UT$,
\begin{equation}
  \label{eq:pT}
  \begin{alignedat}{2}
    (\GRAD\pT\sv,\GRAD w)_T
    &= (\GRAD\unv[T],\GRAD w)_T
    + \sum_{F\in\Fh[T]}(\unv[F]-\unv[T],\GRAD w\SCAL\normal_{TF})_F
    &\qquad&\forall w\in\Poly{k+1}(T),
    \\
    \int_T\pT\sv &= \int_T\unv[T].
  \end{alignedat}
\end{equation}
The discrete Neumann problem~\eqref{eq:pT} is well-posed.
The following result has been proved in~\cite[Lemma~3]{Di-Pietro.Ern.ea:14}.
\begin{lemma}[Approximation properties for $\pT\IT$] \label{lem:approx.rT}
  There exists a real number $C>0$, depending on $\varrho$ and $k$, but independent of $h_T$, such that, for all $v\in H^{k+2}(T)$,
  \begin{multline}
    \label{eq:rT.approx}
    \norm[T]{v-\pT\IT v}
    + h_T^{\nicefrac12}\norm[\partial T]{v-\pT\IT v} \\
    + h_T\norm[T]{\GRAD (v-\pT\IT v)}
    + h_T^{\nicefrac32}\norm[\partial T]{\GRAD (v-\pT\IT v)}
    \le C h_T^{k+2}\norm[H^{k+2}(T)]{v}.
  \end{multline}
\end{lemma}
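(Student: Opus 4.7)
The plan is to exploit the structural observation that $\pT\IT$ coincides with the local elliptic (Ritz) projector onto $\Poly{k+1}(T)$, i.e.\ the unique map $\widetilde{\pi}_T:H^1(T)\to\Poly{k+1}(T)$ characterized by the Galerkin orthogonality $(\GRAD(v-\widetilde{\pi}_T v),\GRAD w)_T=0$ for all $w\in\Poly{k+1}(T)$ together with the mean-value condition $\int_T\widetilde{\pi}_T v=\int_T v$. Once this identification is established, \eqref{eq:rT.approx} follows from classical polynomial approximation combined with \eqref{eq:approx.lproj.T}.

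To carry out the identification, I would test the first equation of \eqref{eq:pT} with $w\in\Poly{k+1}(T)$ and $\sv=\IT v$, then integrate by parts on the cell term of the right-hand side:
\begin{equation*}
(\GRAD\lproj[T]{k}v,\GRAD w)_T=-(\lproj[T]{k}v,\Delta w)_T+\sum_{F\in\Fh[T]}(\lproj[T]{k}v,\GRAD w\SCAL\normal_{TF})_F.
\end{equation*}
The cell-trace pieces cancel the $\unv[T]$-contributions from the face terms in \eqref{eq:pT}, leaving $\sum_F(\lproj[F]{k}v,\GRAD w\SCAL\normal_{TF})_F$. Since $\Delta w\in\Poly{k-1}(T)\subset\Poly{k}(T)$ and $\GRAD w\SCAL\normal_{TF}|_F\in\Poly{k}(F)$, both projectors can be removed via their defining $L^2$-orthogonality, and a second integration by parts then yields $(\GRAD\pT\IT v,\GRAD w)_T=(\GRAD v,\GRAD w)_T$. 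The mean-value condition holds by the second equation in \eqref{eq:pT} and the definition of $\lproj[T]{k}$.

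With this characterization, the volume estimates fall out of the optimality of the Ritz projector: the Galerkin orthogonality gives $\|\GRAD(v-\pT\IT v)\|_T\le\|\GRAD(v-w)\|_T$ for every $w\in\Poly{k+1}(T)$, and choosing $w=\lproj[T]{k+1}v$ together with \eqref{eq:approx.lproj.T} (with $l=k+1$, $m=1$, $s=k+2$) provides the $O(h_T^{k+1})$ gradient bound. The $L^2$-bound of order $h_T^{k+2}$ is then obtained either by an Aubin--Nitsche duality on $T$ or, more elementarily, by noting that $\pT\IT v$ preserves the mean value of $v$ and applying a Poincar\'e--Wirtinger inequality to $v-\pT\IT v$ (whose constant is controlled by $\varrho$ through Assumption~\ref{def:adm.Th}) combined with the gradient estimate.

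The main technical obstacle is the pair of trace estimates on $\partial T$. I would treat both by the same decomposition $v-\pT\IT v=(v-\lproj[T]{k+1}v)+(\lproj[T]{k+1}v-\pT\IT v)$. The first summand is bounded directly through the boundary half of \eqref{eq:approx.lproj.T}. The second is a polynomial of degree $\le k+1$ on $T$ to which \eqref{eq:trace.disc} applies, reducing the boundary $L^2$ norm to the interior $L^2$ norm already controlled; for the gradient trace, \eqref{eq:trace.disc} is applied componentwise to $\GRAD(\lproj[T]{k+1}v-\pT\IT v)\in\Poly{k}(T)^d$ and combined with the $H^1$ volume bound. Throughout, one must track the dependencies via \eqref{eq:hF} and \eqref{eq:Np} so that the final constant depends only on $\varrho$ and $k$.
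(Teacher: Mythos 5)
The paper does not actually prove Lemma~\ref{lem:approx.rT}: it imports it verbatim from~\cite[Lemma~3]{Di-Pietro.Ern.ea:14}, and the proof given there rests on precisely the identification you make, namely that $\pT\IT$ coincides with the elliptic projector onto $\Poly{k+1}(T)$ (obtained, as you do, by integrating by parts and removing $\lproj[T]{k}$ and $\lproj[F]{k}$ against $\Delta w\in\Poly{k}(T)$ and $\GRAD w\SCAL\normal_{TF}\in\Poly[d-1]{k}(F)$). Your proposal is correct and follows essentially the same route, including the zero-mean/Poincar\'e--Wirtinger argument for the $L^2$ bound (Aubin--Nitsche is neither needed nor convenient on general polytopes) and the splitting through $\lproj[T]{k+1}v$ combined with the discrete trace inequality~\eqref{eq:trace.disc} for the boundary terms.
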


\section{Local bilinear forms}
\label{sec:disc.op}

In this section we define the local bilinear forms. These forms are expressed in terms of local DOFs and are instrumental in deriving the discrete problem in Section~\ref{sec:method}. 

\subsection{Diffusion}\label{sec:diff}

To discretize the diffusion term in~\eqref{eq:strong}, we introduce, for all $T\in\Th$, the bilinear form $a_{\diff,T}$ on $\UT\times\UT$ such that
\begin{equation}
    \label{eq:aT.nu}
    a_{\diff,T}(\sw,\sv)
    \eqbydef (\diff[T]\GRAD\pT\sw,\GRAD\pT\sv)_T + s_{\diff,T}(\sw,\sv),
\end{equation}
with stabilization bilinear form $s_{\diff,T}$ on $\UT\times\UT$ such that
\begin{equation}
  \label{eq:sT.nu}
  s_{\diff,T}(\sw,\sv)\eqbydef
  \sum_{F\in\Fh[T]}
  \frac{\diff[T]}{h_F}(\lproj[F]{k}(\unw[F]-\PT\sw),\lproj[F]{k}(\unv[F]-\PT\sv))_F.
\end{equation}
In~\eqref{eq:sT.nu}, the potential reconstruction $\PT:\UT\to\Poly{k+1}(T)$ is such that, for all $\sv\in\UT$, 
\begin{equation*}
  \label{eq:RT}
  \PT\sv \eqbydef  \unv[T] + (\pT\sv - \lproj[T]{k}\pT\sv),
\end{equation*}
where the second term can be interpreted as a high-order correction of $\unv[T]$.

\subsection{Advection-reaction}
\label{sec:disc.prob:adv.rea}

For all $T\in\Th$, we introduce the discrete advective derivative $\GBT:\UT\to\Poly{k}(T)$ such that, for all $\sv\in\UT$ and all $w\in\Poly{k}(T)$,
\begin{subequations}
  \begin{align}
    \label{eq:GBT}
    (\GBT\sv,w)_T 
    &= -(\unv[T],\vel\SCAL\GRAD w)_T + \sum_{F\in\Fh[T]}((\vel\SCAL\normal_{TF})\unv[F],w)_F
    \\
    \label{eq:GBT.bis}
    &= (\vel\SCAL\GRAD\unv[T], w)_T + \sum_{F\in\Fh[T]}((\vel\SCAL\normal_{TF})(\unv[F]-\unv[T]),w)_F,
  \end{align}
\end{subequations}
where we have integrated by parts the first term in the right-hand side and used $\DIV\vel\equiv 0$ to pass from~\eqref{eq:GBT} to~\eqref{eq:GBT.bis}.
We introduce, for all $T\in\Th$, the local bilinear form $a_{\vel,\reac,T}$ on $\UT\times\UT$ such that
\begin{equation*}
  \label{eq:aT.vel.mu}
  a_{\vel,\reac,T}(\sw,\sv)\eqbydef
  -(\unw[T],\GBT\sv)_T + (\reac\unw[T],\unv[T])_T + s_{\vel,T}\pneg(\sw,\sv).
\end{equation*}
The local stabilization bilinear forms $s_{\vel,T}^\pm$ on $\UT\times\UT$ are such that
\begin{equation*}
  \label{eq:sT.vel}
  s_{\vel,T}^\pm(\sw,\sv)\eqbydef
  \sum_{F\in\Fh[T]}(\tfrac{\diff[F]}{h_F}\Apm(\Pe_{TF})(\unw[F]-\unw[T]),\unv[F]-\unv[T])_F.
\end{equation*}
Here, $\diff[F]\eqbydef\min_{T\in\Th[F]}\diff[T]$ is the lowest diffusion coefficient from the (one or) two cells sharing $F$, and
the local (oriented) P\'eclet number $\Pe_{TF}$ is defined if $\diff[F]>0$ by
\begin{equation}
  \label{def:Peclet}
  \Pe_{TF}=h_F \frac{\vel\SCAL\normal_{TF}}{\diff[F]},
\end{equation}
while we use~\eqref{eq:def:Apmdeg} below if $\diff[F]=0$. 
Since, for all $F\in\Fhb$, there is a unique $T\in\Th$ such that $F\subset\partial T$, we simply write $\Pe_F$ instead of $\Pe_{TF}$ in this case. Notice that the local P\'eclet number $\Pe_{TF}$ is a function $F\to\mathbb{R}$.

The functions $\Apm:\Real\rightarrow\Real$ are such that $\Apm(s)=\frac12(\Aabs(s)\pm s)$ for all $s\in\Real$, and the function $\Aabs:\Real\rightarrow\Real$ is assumed to satisfy the following design conditions: 
\begin{description}
\item \ASSUM{A}{1} $\Aabs$ is a Lipschitz-continuous function such that $\Aabs(0)=0$ and,  for all $s\in\Real$, $\Aabs(s)\ge0$ and $\Aabs(-s)=\Aabs(s)$;
\item \ASSUM{A}{2} there exists $\underline{a}\ge 0$ such that $\Aabs(s)
\ge \underline{a}|s|$ for any $|s|\ge 1$;
\item \ASSUM{A}{3} If $\ldiff=0$, $\lim_{s\to+\infty}\frac{\Aabs(s)}{s}= 1$ and,
consistently with~\ASSUM{A}{1}, $\lim_{s\to -\infty}\frac{\Aabs(s)}{s}= -1$. Coherently, for all $T\in\Th$ and all $F\in\Fh[T]$ such that $\diff[F]=0$, we set 
  \begin{equation}\label{eq:def:Apmdeg}
  \tfrac{\diff[F]}{h_F}\Apm(\Pe_{TF})\eqbydef \lim_{\diff\to 0^+}\left(\tfrac{\diff}{h_F}
\Apm\left(\tfrac{h_F}{\diff}\vel\SCAL\normal_{TF}\right)\right)=(\vel\SCAL\normal_{TF})^{\pm},
  \end{equation}
where, for a real number $s$, we have denoted $s^\pm\eqbydef\tfrac12(|s|\pm s)$.
\end{description}

As already pointed out in \cite{Beirao-da-Veiga.Droniou.ea:10,Chainais.Droniou:11,Droniou:10},
using the generic functions $\Apm$ in the definition of the advective
terms allows for a unified treatment of several classical discretizations. 
The centered scheme corresponds to $\Aabs(s)=0$, which fails to satisfy \ASSUM{A}{2}-\ASSUM{A}{3}. Instead,
Properties \ASSUM{A}{1}--\ASSUM{A}{3} are fulfilled by the following methods:
\begin{itemize}
\item \emph{Upwind scheme}: $\Aabs(s)=|s|$ (so that $\Apm(s)=s^\pm$
and $\frac{\diff[F]}{h_F}\Apm(\Pe_{TF})=(\vel\SCAL\normal_{TF})^{\pm}$). 
\item \emph{Locally upwinded $\theta$-scheme}: $\Aabs(s)=(1-\theta(s))|s|$, where $\theta\in C^1_c(-1,1)$, $0\le \theta\le 1$ and
$\theta\equiv 1$ on $[-\nicefrac12,\nicefrac12]$, corresponding to the centered scheme if $s\in[-\nicefrac12,\nicefrac12]$ (dominating diffusion)
and the upwind scheme if $s\ge 1$ (dominating advection). 
\item \emph{Scharfetter--Gummel scheme}: $\Aabs(s)=2\left(\frac{s}2\coth(\frac{s}2)-1\right)$.
\end{itemize}
The advantage of the locally upwinded $\theta$-scheme and the Scharfetter--Gummel scheme over the upwind scheme is that they behave as the centered scheme, and thus introduce less artificial diffusion, when $s$ is close to zero (dominating diffusion).

\begin{remark}[Assumption \ASSUM{A}{2}] \label{rem:A2} This assumption implies that $|\Apm(s)|\le \overline{a}|A(s)|$ for all $|s|\ge1$ with $\overline{a}=\frac12(1+\underline{a}^{-1})$. Furthermore, the threshold $|s|\ge 1$ is arbitrary. If it is
changed into $|s|\ge \underline{b}$ for some fixed $\underline{b}\ge 1$, then the only
modification in the error estimate~\eqref{eq:conv.rate} below is to change the
term $\min(1,\Pe_T)$ into $\min(\underline{b},\Pe_T)$.
\end{remark}

\begin{remark}[Assumption \ASSUM{A}{3}]
  \label{rem:A5}
  Assumption \ASSUM{A}{3} is required only in the locally degenerate case where the diffusion coefficient vanishes in one part of the domain.
\end{remark}


\section{Discrete problem and main results}
\label{sec:method}

In this section we build the discretization of~\eqref{eq:strong} using the local bilinear forms of Section~\ref{sec:disc.op}.
A key point is the weak enforcement of boundary conditions to achieve robustness with respect to the P\'eclet number.

\subsection{Discrete bilinear forms}

Local DOFs are collected in the following global space obtained by patching interface values:
\begin{equation*}
  \label{eq:tUh}
  \Uh\eqbydef\left\{ \bigtimes_{T\in\Th}\Poly{k}(T) \right\}\times
  \left\{ \bigtimes_{F\in\Fh}\Poly[d-1]{k}(F) \right\}.
\end{equation*}
We use the notation $\sv[h]=((\unv[T])_{T\in\Th},(\unv[F])_{F\in\Fh})$ for a generic element $\sv[h]\in\Uh$ and, for all $T\in\Th$, it is understood that $\sv[T]$ denotes the restriction of $\sv[h]$ to $\UT$.

Denoting by $\varsigma>0$ a user-dependent boundary penalty parameter, we define the global diffusion bilinear form $a_{\diff,h}$ on $\Uh\times\Uh$ such that
\begin{equation}
  \label{eq:ah.nu}
  a_{\diff,h}(\sw[h],\sv[h])
  \eqbydef
  \sum_{T\in\Th}a_{\diff,T}(\sw,\sv)
  +\sum_{F\in\Fhb} \left\{
  -(\diff[F]\GRAD\pTF\sw\SCAL\normal_{TF},\unv[F])_F
  + \frac{\varsigma\diff[F]}{h_F}(\unw[F],\unv[F])_F
  \right\},
\end{equation}
and the global advection-reaction bilinear form $a_{\vel,\reac,h}$ such that
\begin{equation}
  \label{eq:ah.vel.mu}
  a_{\vel,\reac,h}(\sw[h],\sv[h])\eqbydef
  \sum_{T\in\Th}a_{\vel,\reac,T}(\sw,\sv)
  +\sum_{F\in\Fhb}(\tfrac{\diff[F]}{h_F}\Ap(\Pe_F)\unw[F],\unv[F])_F.
\end{equation}
The rightmost terms in~\eqref{eq:ah.nu} and~\eqref{eq:ah.vel.mu} are responsible for the weak enforcement of the boundary condition on $\Gam$.
Finally, we set
\begin{equation}
  \label{eq:ah}
  a_h(\sw[h],\sv[h])
  \eqbydef a_{\diff,h}(\sw[h],\sv[h]) + a_{\vel,\reac,h}(\sw[h],\sv[h]),
\end{equation}
and we define the linear form $l_h$ on $\Uh$ such that
\begin{equation}
  \label{eq:lh}
  l_h(\sv[h])
  \eqbydef
  \sum_{T\in\Th}(f,\unv[T])_T
  +\sum_{F\in\Fhb}\left\{
  (\tfrac{\diff[F]}{h_F}\Am(\Pe_F)g,\unv[F])_F
  +\frac{\varsigma\diff[F]}{h_F}(g,\unv[F])_F
  \right\}.
\end{equation}
The discrete problem reads:
Find $\su[h]\in\Uh$ such that, for all $\sv[h]\in\Uh$, 
\begin{equation}
  \label{eq:discrete}
  a_h(\su[h],\sv[h]) = l_h(\sv[h]).
\end{equation}

\begin{remark}[Symmetric variation for $a_{\diff,h}$]
  \label{rem:ah.nu.symm}
  A symmetric expression of $a_{\diff,h}$ is obtained by adding the term
  $
  -\sum_{F\in\Fhb}(\unw[F],\diff[F]\GRAD\pTF\sv\SCAL\normal_{TF})_F,
  $
  to the right-hand side of~\eqref{eq:ah.nu} and, correspondingly, the term
  $
  -\sum_{F\in\Fhb}(g,\diff[F]\GRAD\pTF\sv\SCAL\normal_{TF})_F
  $
  to the right-hand side of~\eqref{eq:lh}.
  This variation is not further pursued here since the problem~\eqref{eq:strong} is itself nonsymmetric.
\end{remark}%

\begin{remark}[Static condensation]
It is possible to locally eliminate the degrees of freedom inside each cell
$T\in\Th$ by selecting in \eqref{eq:discrete} a test function $\sv[h]$ such that $\unv[T']=0$
for all $T'\neq T$ and $\unv[F]=0$ for all $F\in \Fh$. This yields $a_{\diff,T}(\su,\sv) + a_{\vel,\reac,T}(\su,\sv) = (f,\unv[T])$ for all 
$\unv[T]\in \Poly{k}(T)$.
This relation involves only $\unu[T]$ and $(\unu[F])_{F\in\Fh[T]}$ and,
for fixed face values, it is a square system in $\unu[T]$ with a right-hand side
defined through $f$ and $(\unu[F])_{F\in\Fh[T]}$. 
The invertibility of this system follows from the fact that
$a_{\diff,T}(\su,\su)\ge 0$ and that
\[
	a_{\vel,\reac,T}(\su,\su)=\sum_{F\in\Fh[T]}([\tfrac{1}{2}\vel\SCAL\normal_{TF}+\tfrac{\diff[F]}{h_F}\Am(\Pe_{TF})]\unu[T],\unu[T])_F +(\reac\unu[T],\unu[T])_T,
\]
where we used Stokes formula for the advective derivative.
If $\diff[F]>0$ then, recalling the definition $\Am(s)=\tfrac{1}{2}\Aabs(s)-\tfrac{1}{2}s$, we infer that
\[
\tfrac{1}{2}\vel\SCAL\normal_{TF}+\tfrac{\diff[F]}{h_F}\Am(\Pe_{TF})
=\tfrac{\diff[F]}{h_F}[\tfrac{1}{2}\Pe_{TF}+\Am(\Pe_{TF})]
=\tfrac{1}{2}\tfrac{\diff[F]}{h_F}\Aabs(\Pe_{TF})\ge 0.
\]
This relation still holds if $\diff[F]=0$ provided that we use the definition
\eqref{eq:def:Apmdeg} for $\Am$ and $\Aabs$. Hence, 
$a_{\vel,\reac,T}(\su,\su)\ge (\reac\unu[T],\unu[T])_T$, and we conclude using 
$\reac\ge \reac_0>0$.
\end{remark}

\subsection{Discrete norms and stability}

The analysis of the discrete problem~\eqref{eq:discrete} involves several norms.
For the sake of easy reference, their definitions are gathered here, as well
as some related stability properties.
The energy-like diffusion (semi)norm is defined on $\Uh$ by
\begin{equation}
  \label{eq:norm.nu}
  \begin{array}{l}
	\displaystyle\norm[\diff,h]{\sv[h]}^2 \eqbydef 
  \sum_{T\in\Th} \norm[\diff,T]{\sv}^2
+ \seminorm[\diff,\partial\Omega]{\sv[h]}^2
\quad\mbox{ with:} \\
	\displaystyle \norm[\diff,T]{\sv}^2\eqbydef a_{\diff,T}(\sv,\sv)
\quad\mbox{ and }\quad\seminorm[\diff,\partial\Omega]{\sv[h]}^2\eqbydef
  \sum_{F\in\Fhb} \frac{\diff[F]}{h_F}\norm[F]{\unv[F]}^2,
	\end{array}
\end{equation}
and owing to~\cite[Lemma~3.1]{Di-Pietro.Ern.ea:14}, we observe that there is $\eta>0$, depending only on $\varrho$, $d$, and $k$, such that, for all $\sv\in\UT$,
\begin{equation}\label{eq:equiv:norms}
	\eta \norm[\diff,T]{\sv}^2\le  \diff[T]\norm[T]{\GRAD\unv[T]}^2
+ \sum_{F\in\Fh[T]}\frac{\diff[T]}{h_T}\norm[F]{\unv[F]-\unv[T]}^2
\le \eta^{-1} \norm[\diff,T]{\sv}^2.
\end{equation}
The advection-reaction (semi)norm is defined on $\Uh$ by
\begin{equation}
  \label{eq:norm.vel.mu}
	\begin{array}{l}
  \displaystyle\norm[\vel,\reac,h]{\sv[h]}^2  \eqbydef\sum_{T\in\Th}\norm[\vel,\reac,T]{\sv}^2
  +\seminorm[\vel,\partial\Omega]{\sv[h]}^2\quad\mbox{ with:}\\
  \displaystyle	\norm[\vel,\reac,T]{\sv}^2\eqbydef\frac12\sum_{F\in\Fh[T]}\norm[F]{\big[\tfrac{\diff[F]}{h_F}\Aabs(\Pe_{TF})\big]^{\nicefrac12}(\unv[F]-\unv[T])}^2+\tc^{-1}\norm[T]{\unv[T]}^2
	\quad\mbox{ and }\\
	\displaystyle
\seminorm[\vel,\partial\Omega]{\sv[h]}^2\eqbydef
  \frac12\sum_{F\in\Fhb}\norm[F]{\big[\tfrac{\diff[F]}{h_F}\Aabs(\Pe_{F})\big]^{\nicefrac12}\unv[F]}^2.
	\end{array}
\end{equation}
Following~\cite[Chapter~2]{Di-Pietro.Ern:12}, the reference time $\tc$ and velocity $\velc$ are defined by
\begin{equation}
  \label{eq:Lvel.tc.velc}
  \Lvel\eqbydef\max_{1\le i\le d} \norm[L^\infty(T)^d]{\GRAD\beta_i}\,,\quad
  \tc\eqbydef\{\max(\norm[L^\infty(T)]{\mu},\Lvel)\}^{-1}\,,
  \quad
  \velc\eqbydef\norm[L^\infty(T)^d]{\vel}\,,
\end{equation}
(recall that $\vel\in{\rm Lip}(\Omega)^d$ implies
$\vel\in W^{1,\infty}(\Omega)^d$).
Finally, we define two advection-diffusion-reaction norms on $\Uh$ as follows:
\begin{equation}
  \label{eq:norm.h}
  \norm[\flat,h]{\sv[h]}^2
  \eqbydef\norm[\diff,h]{\sv[h]}^2
  +\norm[\vel,\reac,h]{\sv[h]}^2\quad\mbox{ and } \quad
  \norm[\sharp,h]{\sv[h]}^2
  \eqbydef\norm[\flat,h]{\sv[h]}^2
  +\sum_{T\in\Th} h_T\velc^{-1}\norm[T]{\GBT\sv}^2,
\end{equation}
where the summand is taken only if $\velc\ne0$.
The error estimate stated in Theorem~\ref{thm:err.est} below uses the $\norm[\sharp,h]{{\cdot}}$-norm, and therefore delivers information on the advective derivative of the error, which is important in the advection-dominated regime.
The $\norm[\flat,h]{{\cdot}}$-norm is, on the other hand, the natural coercivity norm for the bilinear form $a_h$, and is used as an intermediary step in the error analysis. The coercivity norm is sufficient for the error analysis in the diffusion-dominated regime.

\begin{remark}[Norms]
  \label{rem:norms}
  Owing to Assumption~\ref{def:comp.Th}(iii), we infer that $\diff[F]\neq 0$ or $\vel\SCAL\normal_F\not\equiv 0$ (on a subset with positive measure). Hence, $\norm[\flat,h]{{\cdot}}$ and $\norm[\sharp,h]{{\cdot}}$ are norms on $\Uh$. Indeed, if $\diff[F]\neq 0$, then by \eqref{eq:equiv:norms} the diffusive norm
controls the term $\unv[F]-\unv[T]$ and, if $\diff[F]=0$, owing to \eqref{eq:def:Apmdeg}, we obtain $\frac{\diff[F]}{h_F}\Aabs(\Pe_{TF})=|\vel\SCAL\normal_{TF}|\not\equiv 0$ and the advective norm controls $\unv[F]-\unv[T]$.
\end{remark}

Our first important result concerns stability. The proof is postponed to Section~\ref{sec:proof_stab}.
\begin{lemma}[Stability of $a_h$]
  \label{lem:stab.ah}
  Assume $\varsigma\ge1+\tfrac{C_{\rm tr}^2\Np}{2}$ and \ASSUM{A}{1}--\ASSUM{A}{3}. Then, for all $\sv[h]\in\Uh$, the following holds:
\begin{equation}
  \label{eq:coer.ah}
  \xi\norm[\flat,h]{\sv[h]}^2\le a_h(\sv[h],\sv[h]),
\end{equation}
with $\xi\eqbydef\min_{T\in\Th}(\frac12,\tc\reac_0)>0$. 
Assume additionally that, for all $T\in\Th$,
  \begin{equation}
    \label{eq:err.est.ass}
    h_T\Lvel\le\velc\quad\mbox{ and }\quad
    h_T\norm[L^\infty(T)]{\reac} \le\velc,
  \end{equation}
  where $\Lvel$, $\velc$, and $\tc$ are defined by \eqref{eq:Lvel.tc.velc}.
  Then, there exists a real number $\gamma>0$, independent of $h$, $\diff$, $\vel$, and $\reac$, such that, for all $\sw[h]\in\Uh$,
  \begin{equation}
    \label{eq:inf-sup.ah}
    \gamma\xi\norm[\sharp,h]{\sw[h]}\le 
    \sup_{\sv[h]\in\Uh\setminus\{\mathsf{0}\}}\frac{a_h(\sw[h],\sv[h])}{\norm[\sharp,h]{\sv[h]}}.
  \end{equation}
\end{lemma}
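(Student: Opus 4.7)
The proof breaks into the coercivity estimate~\eqref{eq:coer.ah} and the inf-sup estimate~\eqref{eq:inf-sup.ah}. To establish~\eqref{eq:coer.ah}, I decompose $a_h=a_{\diff,h}+a_{\vel,\reac,h}$. For the diffusive part, the boundary consistency term $-\sum_{F\in\Fhb}(\diff[F]\GRAD\pTF\sv\SCAL\normal_{TF},\unv[F])_F$ is controlled via Cauchy--Schwarz, the discrete trace inequality~\eqref{eq:trace.disc}, the bound $\diff[T]\norm[T]{\GRAD\pT\sv}^2\le\norm[\diff,T]{\sv}^2$ inherent to the definition of $\norm[\diff,T]{{\cdot}}$, and a weighted Young inequality with weight $\alpha=C_{\rm tr}^2\Np$; summing over boundary faces and using~\eqref{eq:Np}, this absorbs half of $\sum_T\norm[\diff,T]{\sv}^2$ and leaves a residual $-\tfrac{C_{\rm tr}^2\Np}{2}\seminorm[\diff,\partial\Omega]{\sv[h]}^2$ that is in turn absorbed by the penalty term thanks to $\varsigma\ge 1+\tfrac{C_{\rm tr}^2\Np}{2}$. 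For $a_{\vel,\reac,h}$, I apply cellwise integration by parts in~\eqref{eq:GBT.bis} with $w=\unv[T]$ and use $\DIV\vel\equiv 0$ to derive
\[
-(\unv[T],\GBT\sv)_T=\tfrac12\sum_{F\in\Fh[T]}(\vel\SCAL\normal_{TF},(\unv[F]-\unv[T])^2)_F-\tfrac12\sum_{F\in\Fh[T]}(\vel\SCAL\normal_{TF},\unv[F]^2)_F.
\]
Using $\tfrac{\diff[F]}{h_F}\Am(\Pe_{TF})=\tfrac12(\tfrac{\diff[F]}{h_F}\Aabs(\Pe_{TF})-\vel\SCAL\normal_{TF})$, the first sum combines with $s_{\vel,T}\pneg(\sv,\sv)$ to produce the cellwise jump part of $\norm[\vel,\reac,T]{\sv}^2$. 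Summed over $\Th$, the $\unv[F]^2$ terms cancel on interior faces by single-valuedness and opposite normals; on boundary faces they combine with the penalty $(\tfrac{\diff[F]}{h_F}\Ap(\Pe_F)\unv[F],\unv[F])_F$ (via $\Ap(s)=\tfrac12(\Aabs(s)+s)$) to produce $\seminorm[\vel,\partial\Omega]{\sv[h]}^2$. The reaction satisfies $(\reac\unv[T],\unv[T])_T\ge\reac_0\norm[T]{\unv[T]}^2\ge\zeta\,\tc^{-1}\norm[T]{\unv[T]}^2$ thanks to $\zeta\le\tc\reac_0$, and $\zeta\le\tfrac12$ absorbs the $\tfrac12$ coefficient on the jumps; this completes~\eqref{eq:coer.ah}.

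For~\eqref{eq:inf-sup.ah}, under the additional conditions~\eqref{eq:err.est.ass}, I take $\sv[h]=\sw[h]+\delta\sz[h]$ for a small fixed $\delta>0$, where $\sz[h]\in\Uh$ is defined by $\unz[T]\eqbydef h_T\velc^{-1}\GBT\sw$ in each $T\in\Th$ and $\unz[F]\eqbydef 0$ on every $F\in\Fh$. Combining~\eqref{eq:GBT} for $\GBT\sz$ with $w=\unw[T]$ and~\eqref{eq:GBT.bis} for $\GBT\sw$ tested against $w=h_T\velc^{-1}\GBT\sw\in\Poly{k}(T)$ yields the key identity
\[
-(\unw[T],\GBT\sz)_T=h_T\velc^{-1}\norm[T]{\GBT\sw}^2-\sum_{F\in\Fh[T]}((\vel\SCAL\normal_{TF})(\unw[F]-\unw[T]),h_T\velc^{-1}\GBT\sw)_F,
\]
which reproduces the missing $\sum_T h_T\velc^{-1}\norm[T]{\GBT\sw}^2$ contribution in $\norm[\sharp,h]{\sw[h]}^2$ up to a face cross term. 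The remaining pieces of $a_h(\sw[h],\sz[h])$ --- the reaction $(\reac\unw[T],\unz[T])_T$, the advection penalty $s_{\vel,T}\pneg(\sw,\sz)$, the diffusive contribution $(\diff[T]\GRAD\pT\sw,\GRAD\pT\sz)_T+s_{\diff,T}(\sw,\sz)$, and the boundary terms --- are bounded using Cauchy--Schwarz, the trace~\eqref{eq:trace.disc} and inverse~\eqref{eq:inv} inequalities, and weighted Young inequalities. Each cross term is split so that one piece is absorbed by $\zeta\norm[\flat,h]{\sw[h]}^2$ from coercivity applied to $a_h(\sw[h],\sw[h])$, and the other by the reproduced $\sum_T h_T\velc^{-1}\norm[T]{\GBT\sw}^2$, for $\delta$ small enough. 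This is where~\eqref{eq:err.est.ass} plays a pivotal role: the bounds $h_T\Lvel\le\velc$ and $h_T\reac_0\le\velc$ are what convert dimensional scalings like $\tc^{-1}\norm[T]{\unz[T]}^2\sim\tc^{-1}h_T^2\velc^{-2}\norm[T]{\GBT\sw}^2$ into controllable factors times $h_T\velc^{-1}\norm[T]{\GBT\sw}^2$, uniformly in the P\'eclet regime.

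To conclude, I would verify $\norm[\sharp,h]{\sv[h]}\lesssim\norm[\sharp,h]{\sw[h]}$ by estimating each piece of $\norm[\sharp,h]{\sz[h]}^2$. With $\unz[F]=0$, the equivalence~\eqref{eq:equiv:norms} with the trace/inverse inequalities bounds $\norm[\diff,h]{\sz[h]}^2$; the Lipschitz bound $\tfrac{\diff[F]}{h_F}\Aabs(\Pe_{TF})\lesssim\velc$ from~\ASSUM{A}{1} controls $\norm[\vel,\reac,h]{\sz[h]}^2$; and a direct estimate based on~\eqref{eq:GBT} with the trace/inverse inequalities gives $\sum_T h_T\velc^{-1}\norm[T]{\GBT\sz}^2\lesssim\sum_T h_T\velc^{-1}\norm[T]{\GBT\sw}^2$. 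Combining coercivity with the reproduction step yields $a_h(\sw[h],\sv[h])\gtrsim\zeta\norm[\sharp,h]{\sw[h]}^2$ for a suitably small $\delta$, from which~\eqref{eq:inf-sup.ah} follows with $\gamma$ depending on $\delta$, on the constants in the trace/inverse inequalities, and on the design parameters. \emph{The main obstacle} is the careful bookkeeping of the numerous cross terms in the inf-sup step: the Young weights must be chosen consistently so that both absorptions work simultaneously with a single small $\delta$, irrespective of the P\'eclet regime; the small-cell assumption~\eqref{eq:err.est.ass} is precisely what makes a uniform such $\delta$ available.
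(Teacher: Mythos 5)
Your proposal is correct and follows essentially the same route as the paper: coercivity is obtained by absorbing the boundary consistency term of $a_{\diff,h}$ under the threshold on $\varsigma$ and by the discrete integration-by-parts/$\Apm$ algebra for $a_{\vel,\reac,h}$, while the inf-sup bound uses the Johnson--Pitk\"{a}ranta-type test function $\unz[T]=h_T\velc^{-1}\GBT\sw$, $\unz[F]=0$, together with the comparison $\norm[\sharp,h]{\sz[h]}\lesssim\norm[\sharp,h]{\sw[h]}$. The only cosmetic difference is that you combine the test functions as $\sw[h]+\delta\sz[h]$ with a small $\delta$, whereas the paper estimates $\sum_{T\in\Th} h_T\velc^{-1}\norm[T]{\GBT\sw}^2$ separately against the supremum and merges with coercivity at the end; note only that the ``bookkeeping'' you defer is precisely where the bounds $|\tfrac{\diff[F]}{h_F}\Apm(\Pe_{TF})|\lesssim\tfrac{\diff[F]}{h_F}+\tfrac{\diff[F]}{h_F}\Aabs(\Pe_{TF})$ coming from \ASSUM{A}{2}--\ASSUM{A}{3} must be invoked, not just Cauchy--Schwarz, trace, and inverse inequalities.
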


\begin{remark}[Threshold for $\varsigma$]
  The dependency on $C_{\rm tr}$ of the threshold on $\varsigma$ introduced in Lemma~\ref{lem:stab.ah} can be removed by considering a lifting-based penalty term such as the one discussed in~\cite[Section~5.3.2]{Di-Pietro.Ern:12} and originally introduced by Bassi et al.~\cite{Bassi.Rebay.ea:97} in the context of discontinuous Galerkin methods. Furthermore, the strict minimal threshold in Lemma~\ref{lem:stab.ah} is $\varsigma>\tfrac{C_{\rm tr}^2\Np}{4}$. It is also possible to replace $\Np$ by the maximum number of faces of cells having a boundary face.
\end{remark}

\begin{remark}[Assumption~\eqref{eq:err.est.ass}]
  The first inequality in~\eqref{eq:err.est.ass} stipulates that the meshsize resolves the spatial variations of the advective velocity $\vel$.
The quantity $\Da_T\eqbydef h_T\norm[L^\infty(T)]{\mu}\velc^{-1}$ is a local Damk\"{o}hler number relating the reactive and advective time scales. The second inequality in \eqref{eq:err.est.ass}
assumes that $\Da_T\le 1$ for all $T\in\Th$, meaning that we are not concerned with the reaction-dominated regime. We could also state a
stability result without \eqref{eq:err.est.ass}, but the dependency on the various constants would be somewhat more intricate.
\end{remark}

\subsection{Error estimate}
\label{sec:res.err}

For all $F\in\Fh$, we denote by $\Tdiff$ one element of $\Th[F]$ such that $\Tdiff\in\arg\max_{T\in\Th[F]}\diff[T]$ (such an element may not be unique when $F$ is an interface).
Consider now an interface $F\in\Fhi$ such that $F\subset\Int[-]$.
Since the exact solution can jump on $F$, we have to deal with a possibly two-valued trace for the exact solution.
It turns out that, in this case, the face unknown captures the trace from the diffusive side, i.e., from the unique element $\Tdiff\in\Th[F]$ such that $\restrto{\diff}{\Tdiff}>0$.
We therefore define the global interpolation operator $\Ih: H^1(\Omega\setminus\Int)\to\Uh$ such that, for all $v\in H^1(\Omega\setminus\Int)$,
\begin{equation}
  \label{eq:Ih}
  \Ih v \eqbydef \left( (\lproj[T]{k} v)_{T\in\Th}, (\lproj[F]{k}  [\restrto{v}{T_{\diff}(F)}])_{F\in\Fh}\right).
\end{equation}%
Our main result is the following estimate on the discrete approximation error $(\Ih u-\su[h])$ measured in the $\norm[\sharp,h]{{\cdot}}$-norm. The proof is postponed to Section~\ref{sec:proof_err}.

\begin{theorem}[Error estimate]
  \label{thm:err.est}
Assume $\varsigma\ge1+\tfrac{C_{\rm tr}^2\Np}{2}$, \ASSUM{A}{1}--\ASSUM{A}{3}, and~\eqref{eq:err.est.ass}. Denote by $u$ and $\su[h]$ the unique solutions to~\eqref{eq:strong} and~\eqref{eq:discrete}, respectively, and assume that $\restrto{u}{T}\in H^{k+2}(T)$ for all $T\in\Th$. Then, there exists a real number $\gamma'>0$ 
depending on $\varrho$, $d$, and $k$, but independent of
$h$, $\diff$, $\vel$, and $\reac$, such that, letting $\shu[h]\eqbydef\Ih u$
and $\xi=\min_{T\in\Th}(\frac12,\tc\mu_0)$,
  \begin{multline}
    \label{eq:conv.rate}
    \gamma'\xi\norm[\sharp,h]{\shu[h]-\su[h]}\\
	\le \left\{\sum_{T\in\Th}\left[
    (\diff[T]\norm[H^{k+2}(T)]{u}^2+\tc^{-1}\norm[H^{k+1}(T)]{u}^2)h_T^{2(k+1)}
    + \velc \min(1,\Pe_T) h_T^{2k+1}\norm[H^{k+1}(T)]{u}^2
    \right]
    \right\}^{\nicefrac12}
  \end{multline}
where $\Pe_{T}=\max_{F\in\Fh[T]}\norm[L^\infty(F)]{\Pe_{TF}}$ is a local P\'eclet number 
(conventionally, $\norm[L^\infty(F)]{\Pe_{TF}}=+\infty$ if $\diff[F]=0$).
\end{theorem}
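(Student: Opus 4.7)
The plan is a Strang-type consistency argument anchored on the inf-sup estimate \eqref{eq:inf-sup.ah}. Setting $\sz[h]\eqbydef\shu[h]-\su[h]$ and subtracting the discrete problem \eqref{eq:discrete} satisfied by $\su[h]$, Lemma~\ref{lem:stab.ah} yields
\begin{equation*}
\gamma\zeta\norm[\sharp,h]{\sz[h]}\le \sup_{\sv[h]\in\Uh\setminus\{\mathsf{0}\}}\frac{\mathcal{E}_h(\sv[h])}{\norm[\sharp,h]{\sv[h]}},\qquad \mathcal{E}_h(\sv[h])\eqbydef a_h(\shu[h],\sv[h])-l_h(\sv[h]).
\end{equation*}
The whole task therefore reduces to bounding $|\mathcal{E}_h(\sv[h])|$ by the right-hand side of \eqref{eq:conv.rate} times $\norm[\sharp,h]{\sv[h]}$.

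To estimate the consistency error, I would substitute $f=\DIV(-\diff\GRAD u+\vel u)+\reac u$ from \eqref{eq:strong:PDE} into \eqref{eq:lh} and integrate by parts cellwise. Splitting $\mathcal{E}_h=\mathcal{E}_{\diff}+\mathcal{E}_{\vel,\reac}$ and invoking the definitions \eqref{eq:pT}, \eqref{eq:GBT}--\eqref{eq:GBT.bis}, together with the commutation properties of $\lproj[T]{k}$ and $\lproj[F]{k}$, each piece can be rewritten as a sum over cells and faces of the approximation residuals $(u-\pT\IT u)$, $(u-\lproj[T]{k}u)$, $(u-\lproj[F]{k}[\restrto{u}{\Tdiff}])$, and of the stabilizations $s_{\diff,T}(\shu,\sv)$ and $s_{\vel,T}\pneg(\shu,\sv)$. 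On interfaces $F\subset\Intnu$ where $u$ may jump, consistency of the advective term is restored by invoking the flux-matching condition \eqref{eq:match.flux} and exploiting that, by \eqref{eq:Ih}, the face unknown $\unhu[F]$ picks the trace from the diffusive side $\Tdiff$; the boundary contributions on $\Gam$ match the penalty terms in \eqref{eq:ah.nu}--\eqref{eq:lh} via the Dirichlet condition \eqref{eq:strong:BC}.

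Each residual is then controlled by Cauchy--Schwarz combined with Lemma~\ref{lem:approx.rT}, the local $L^2$-projection estimate \eqref{eq:approx.lproj.T}, and the discrete trace inequality \eqref{eq:trace.disc}. This yields a diffusive contribution $\diff[T]h_T^{2(k+1)}\norm[H^{k+2}(T)]{u}^2$, a reactive contribution $\tc^{-1}h_T^{2(k+1)}\norm[H^{k+1}(T)]{u}^2$, and an advective contribution of the form $\velc\min(1,\Pe_T)h_T^{2k+1}\norm[H^{k+1}(T)]{u}^2$. The sharp factor $\min(1,\Pe_T)$ is obtained by a face-by-face case split: when $|\Pe_{TF}|\le 1$, the weight $\tfrac{\diff[F]}{h_F}\Aabs(\Pe_{TF})$ is dominated by $\tfrac{\diff[F]}{h_F}$ (Lipschitz continuity from \ASSUM{A}{1}) and absorbed via the diffusive norm and \eqref{eq:approx.lproj.T}; when $|\Pe_{TF}|>1$ it is bounded by $\overline{a}\,|\vel\SCAL\normal_{TF}|\le \overline{a}\,\velc$ through Remark~\ref{rem:A2}. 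The extra $\sharp$-norm piece $h_T\velc^{-1}\norm[T]{\GBT\sz}^2$ is treated separately by estimating $\GBT\IT u$ against $\lproj[T]{k}(\vel\SCAL\GRAD u)$ via \eqref{eq:GBT.bis}, \eqref{eq:approx.lproj.T}, and \eqref{eq:err.est.ass}.

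The main obstacle is the advective-reactive part of $\mathcal{E}_h$: producing the sharp $\min(1,\Pe_T)$ factor \emph{uniformly} across both P\'eclet regimes, while simultaneously handling degenerate faces ($\diff[F]=0$) via the convention \eqref{eq:def:Apmdeg} together with Assumption~\ref{def:comp.Th}(iii) so that $\norm[\sharp,h]{{\cdot}}$ actually controls the relevant face differences (cf. Remark~\ref{rem:norms}), and ensuring that the face interpolation \eqref{eq:Ih} interacts correctly with \eqref{eq:match.flux} on $\Intnu$ so that no uncontrolled jump of $u$ survives on faces where $u$ is discontinuous. Once this is done, the remaining bounds are routine approximation theory driven by Lemma~\ref{lem:approx.rT} and \eqref{eq:approx.lproj.T}.
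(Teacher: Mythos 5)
Your overall strategy coincides with the paper's: reduce to the consistency error via the inf-sup condition \eqref{eq:inf-sup.ah}, rewrite $\mathcal{E}_h$ by cellwise integration by parts and flux conservation, and split the advective face terms according to $|\Pe_{TF}|\le 1$ (absorbed into the diffusive norm via the Lipschitz continuity of $\Aabs$) versus $|\Pe_{TF}|>1$ (absorbed into the advective norm via \ASSUM{A}{2}). One minor point: the step in which you treat ``the extra $\sharp$-norm piece $h_T\velc^{-1}\norm[T]{\GBT\sz}^2$'' separately is superfluous. Once \eqref{eq:inf-sup.ah} is applied with $\sw[h]=\shu[h]-\su[h]$, the full $\norm[\sharp,h]{{\cdot}}$-norm of the error, including its advective-derivative part, is already controlled by the consistency error, and no estimate of $\GBT(\shu[h]-\su[h])$ is required.

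More importantly, the point you label ``the main obstacle'' is exactly the step you do not close, and it is the one genuinely nonroutine ingredient of the proof. On a face $F\subset\Intnu$ one has $\diff[F]=0$, and the stabilization $s_{\vel,T}^-$ contributes a term weighted by $\tfrac{\diff[F]}{h_F}\Am(\Pe_{TF})=(\vel\SCAL\normal_{TF})^-$ (by \eqref{eq:def:Apmdeg}) and involving $\unhu[F]-\unhu[T]$, where $\unhu[F]$ is interpolated from the diffusive side \eqref{eq:Ih}; for the cell $T$ on the nondiffusive side this difference carries the jump of $u$, which is $O(1)$ and cannot be controlled by approximation theory. The paper's resolution is a case distinction: if $F\subset\Int[+]$ there is no jump by \eqref{eq:match.u}; if $F\subset\Int[-]$, then for the nondiffusive cell $T$ one has $\vel\SCAL\normal_{TF}>0$, so $(\vel\SCAL\normal_{TF})^-\equiv 0$ and the offending term vanishes identically (while for the diffusive cell $\Tdiff=T$ and there is nothing to prove). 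Neither \eqref{eq:match.flux} nor Assumption~\ref{def:comp.Th}(iii), which you invoke at this point, supplies this cancellation; it comes from the specific choice of the $\Am$-weight in $s_{\vel,T}^-$ combined with the orientation convention defining $\Int[\pm]$. Without this observation the advective consistency estimate fails on degenerate interfaces, so this step must be made explicit rather than deferred.
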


\begin{remark}[Regime-dependent estimate]\label{rem:regimes}
Using the local P\'eclet number in
\eqref{eq:conv.rate} allows us to establish an error estimate which
locally adjusts to the various regimes of \eqref{eq:strong}.
In mesh cells where diffusion dominates so that $\Pe_T\le h_T$, the contribution to the right-hand side of \eqref{eq:strong}
is $\mathcal O(h_T^{2(k+1)})$. In mesh cells where advection dominates so that $\Pe_T\ge 1$, the contribution is $\mathcal O(h_T^{2k+1})$. The transition region,
where $\Pe_T$ is between $h_T$ and $1$, corresponds to intermediate orders of convergence. Notice also that the diffusive contribution exhibits the superconvergent behavior $\mathcal O(h_T^{2(k+1)})$ typical of HHO methods, see~\cite{Di-Pietro.Ern.ea:14,Di-Pietro.Ern:14a}. As a result, the balancing with the advective contribution is slightly different with respect to other methods where the diffusive contribution typically scales as $\mathcal O(h_T^{2k})$.
\end{remark}

\subsection{Link with Hybrid Mixed Mimetic methods}\label{sec:link}

We assume here that the diffusion is not degenerate, i.e. $\ldiff>0$, and show that, under a slight modification of the definition of $\Pe_{TF}$, see~\eqref{def:newPe} below, the present discontinuous-skeletal method for $k=0$ corresponds to a face-based Hybrid Mimetic Mixed (HMM) method
studied for advective-diffusive equations in~\cite{Beirao-da-Veiga.Droniou.ea:10}. In this section, we consider that the local P\'eclet number $\Pe_{TF}$ is no
longer a function defined on the edge $F$, but the average of this function, i.e.,
\begin{equation}\label{def:newPe}
\Pe_{TF}=\frac{1}{|F|}\int_F \frac{h_F}{\diff[F]}\vel\SCAL\normal_{TF}.
\end{equation}
With this new definition, and assuming that $\ldiff>0$,
the face-based HMM method for \eqref{eq:strong} with $\reac=0$ can be written
(see \cite[Eqs.~(2.48)--(2.49)]{Beirao-da-Veiga.Droniou.ea:10})
in the flux balance and continuity form as follows:
\begin{eqnarray}
	&&\forall T\in\Fh\,:\,\sum_{F\in\Fh[T]}|F|\left[(\mathbb{F}_{\rm d})_{TF}
	+(\mathbb{F}_{\rm a})_{TF}\right] = \int_T f\,\label{HMM:eq1}\\
	&&\forall F\in \Fh[T]\cap\Fh[T']\mbox{ with $T\neq T'$}\,:\;(\mathbb{F}_{\rm d})_{TF}+
	(\mathbb{F}_{\rm a})_{TF}+(\mathbb{F}_{\rm d})_{T'F}+(\mathbb{F}_{\rm a})_{T'F}=0,
	\label{HMM:eq2}
\end{eqnarray}
where $\mathbb{F}_{\rm d}$ and $\mathbb{F}_{\rm a}$ are diffusion and advection fluxes,
constructed from the unknown $\su[h]\in \Uhz$. 
We additionally assume that boundary conditions are strongly enforced by considering the space $\Uhz[,0]\eqbydef\left\{\sv[h]\in\Uhz\st\unv[F]\equiv 0\quad\forall F\in\Fhb\right\}$ (we are entitled to strongly enforce boundary conditions since we assume $\ldiff>0$ in this section).
Taking $\sv[h]\in \Uhz[,0]$, multiplying \eqref{HMM:eq1} by the constant value $\unv[T]$, summing on the cells $T\in \Th$, and using the flux conservativity \eqref{HMM:eq2} and the strong boundary condition to introduce the constant value
$\unv[F]$ in the sums,
we see that these two equations are equivalent to
\begin{equation}\label{eq:link1}
	\sum_{T\in\Fh}\sum_{F\in\Fh[T]}|F|\left[(\mathbb{F}_{\rm d})_{TF}+(\mathbb{F}_{\rm a})_{TF}\right]
	(\unv[T]-\unv[F])=l_h(\sv[h]),
\end{equation}
for all $\sv[h]\in \Uhz[,0]$.
As seen in \cite{Di-Pietro.Ern.ea:14}, the definition of the diffusive flux
$\mathbb{F}_{\rm d}$ in \cite[Eq. (2.25)]{Droniou.Eymard.ea:10} shows that,
when the stabilization matrices $\mathbb{B}_T$ in the HMM method
are diagonal with coefficients $(\frac{\diff[T]}{h_F}|F|)_{F\in\Fh[T]}$,
the local diffusive term $\sum_{F\in\Fh[T]}|F|(\mathbb{F}_{\rm d})_{TF}(\unv[T]-\unv[F])$
is identical to the local diffusive bilinear form $a_{\diff,T}$ defined in~\eqref{eq:aT.nu}.
Therefore, it remains to study the advective term in \eqref{eq:link1} and
see that it corresponds to $a_{\vel,0,h}(\su[h],\sv[h])$.
With the choice \eqref{def:newPe}, using the diffusive scaling mentioned in
\cite[\S 2.4.1]{Beirao-da-Veiga.Droniou.ea:10} and applying a local geometric scaling
based on the edge diameter $h_F$ rather than the distance between the
two neighboring cell centers, the advective flux is written \cite[Eq. (2.46)]{Beirao-da-Veiga.Droniou.ea:10}
\[
	(\mathbb{F}_{\rm a})_{TF}=\tfrac{\diff[F]}{h_F}\left(\Ap(\Pe_{TF})\unu[T]
-\Am(\Pe_{TF})\unu[F]\right).
\]
Since $\Ap(s)-\Am(s)=s$ and invoking the assumption $\DIV\vel\equiv 0$,
we find that the advective contribution in \eqref{eq:link1} is
\begin{align*}
	&\sum_{T\in\Fh}\sum_{F\in\Fh[T]}|F|\left[\tfrac{\diff[F]}{h_F}\Ap(\Pe_{TF})\unu[T]
-\tfrac{\diff[F]}{h_F}\Am(\Pe_{TF})\unu[F]\right](\unv[T]-\unv[F])\\
	&=\sum_{T\in\Fh}\sum_{F\in\Fh[T]}|F|\left[\tfrac{\diff[F]}{h_F}
\left(\Ap(\Pe_{TF})-\Am(\Pe_{TF})\right)\unu[T]
+\tfrac{\diff[F]}{h_F}\Am(\Pe_{TF})(\unu[T]-\unu[F])\right](\unv[T]-\unv[F])\\
	&=\sum_{T\in\Fh}\sum_{F\in\Fh[T]}\left(\int_F \vel\SCAL\normal_{TF}\right)\unu[T]
(\unv[T]-\unv[F])+\sum_{T\in\Fh}\sum_{F\in\Fh[T]}|F|
\tfrac{\diff[F]}{h_F}\Am(\Pe_{TF})(\unu[T]-\unu[F])(\unv[T]-\unv[F])\\
	&=\sum_{T\in\Fh}\unu[T]\unv[T]\int_T \DIV\vel
-\sum_{T\in\Fh}\unu[T]\sum_{F\in\Fh[T]}\left(\int_F \vel\SCAL\normal_{TF}\right)\unv[F]
+s_{\vel,h}^-(\su[h],\sv[h])\\
	&=-\sum_{T\in\Fh}|T|\unu[T]\left(\frac{1}{|T|}\sum_{F\in\Fh[T]}\left(\int_F \vel\SCAL\normal_{TF}\right)\unv[F]\right)
+s_{\vel,h}^-(\su[h],\sv[h]).
\end{align*}
It is then a simple matter of inspecting the definition \eqref{eq:GBT}
in the case $k=0$ to notice that
\[
\GBT\sv=\frac{1}{|T|}\sum_{F\in\Fh[T]}\left(\int_F \vel\SCAL\normal_{TF}\right) \unv[F],
\]
and therefore conclude that the advective contribution in \eqref{eq:link1}
is indeed $a_{\vel,0,h}(\su[h],\sv[h])$.


\subsection{Numerical results}
\label{sec:num.val}

To close this section, we provide numerical results illustrating the error estimate of Theorem~\ref{thm:err.est}.

\subsubsection{Uniform diffusion}
\label{sec:num.val:test1}

To numerically assess the sharpness of estimate~\eqref{eq:conv.rate} in the uniform diffusion case, we solve on the unit square the problem~\eqref{eq:strong} with boundary conditions and right-hand side inferred from the following exact solution:
$$
u(\vec{x}) = \sin(\pi x_1)\sin(\pi x_2).
$$
We take $\vel(\vec{x})=(\nicefrac12-x_2,x_1-\nicefrac12)$, $\mu=1$, and we let $\nu$ vary in $\{0,10^{-3},1\}$.
In Figure~\ref{fix:num.val:test1} we display the convergence results for the three mesh families depicted in Figure~\ref{fig:meshes:test1}.
From top to bottom, these correspond, respectively, to the mesh families 1 (triangular) and 4.1 (Kershaw) of the FVCA5 benchmark~\cite{Herbin.Hubert:08}, and to the (predominantly) hexagonal mesh family first introduced in~\cite{Di-Pietro.Lemaire:14}.
Each line in Figure~\ref{fix:num.val:test1} corresponds to a different mesh family, and the value of $\diff$ increases from left to right.
In all of the cases, an increase in the asymptotic convergence rate of about half a unit is observed as we increase the value of $\diff$, as predicted by~\eqref{eq:conv.rate}.
The results also show that the method behaves consistently on a variety of meshes including general polygonal elements.
The slightly higher convergence rates for the Kershaw mesh family are possibly due to the fact that the mesh regularity changes when refining.

\begin{figure}
\ifDdP
  \centering
  \includegraphics[height=3.5cm]{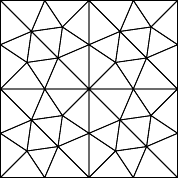}
  \hspace{1ex}
  \includegraphics[height=3.5cm]{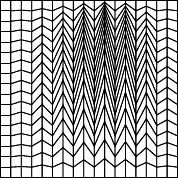}
  \hspace{1ex}
  \includegraphics[height=3.5cm]{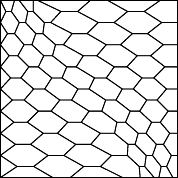}
\fi
  \caption{Meshes for the test case of Section~\ref{sec:num.val:test1}. From left to right, the meshes are refereed to as triangular, Kershaw and hexagonal, respectively\label{fig:meshes:test1}}
\end{figure}

\begin{figure}
\ifDdP
  \begin{minipage}[b]{0.30\linewidth}
    \begin{tikzpicture}[scale=0.60]
      \begin{loglogaxis}[
          xmin=0.001,
          legend style = {
            legend pos = south east
          }
        ]
        \addplot table[x=meshsize,y={create col/linear regression={y=err_sharp}}] {dat/adho/adho_0_0_mesh1.dat}
        coordinate [pos=0.75] (A)
        coordinate [pos=1.00] (B);
        \xdef\slopea{\pgfplotstableregressiona}
        \draw (A) -| (B) node[pos=0.75,anchor=east] {\pgfmathprintnumber{\slopea}};
        \addplot table[x=meshsize,y={create col/linear regression={y=err_sharp}}] {dat/adho/adho_0_1_mesh1.dat}
        coordinate [pos=0.75] (A)
        coordinate [pos=1.00] (B);
        \xdef\slopeb{\pgfplotstableregressiona}
        \draw (A) -| (B) node[pos=0.75,anchor=east] {\pgfmathprintnumber{\slopeb}};
        \addplot table[x=meshsize,y={create col/linear regression={y=err_sharp}}] {dat/adho/adho_0_2_mesh1.dat}
        coordinate [pos=0.75] (A)
        coordinate [pos=1.00] (B);
        \xdef\slopec{\pgfplotstableregressiona}
        \draw (A) -| (B) node[pos=0.75,anchor=east] {\pgfmathprintnumber{\slopec}};
        \addplot table[x=meshsize,y={create col/linear regression={y=err_sharp}}] {dat/adho/adho_0_3_mesh1.dat}
        coordinate [pos=0.75] (A)
        coordinate [pos=1.00] (B);
        \xdef\sloped{\pgfplotstableregressiona}
        \draw (A) -| (B) node[pos=0.75,anchor=east] {\pgfmathprintnumber{\sloped}};
        \legend{$k=0$,$k=1$,$k=2$,$k=3$};
      \end{loglogaxis}
    \end{tikzpicture}
    \subcaption{$\diff=0$, triangular}
  \end{minipage}
  \begin{minipage}[b]{0.30\linewidth}
    \begin{tikzpicture}[scale=0.60]
      \begin{loglogaxis}[
          xmin=0.001,
          legend style = {
            legend pos = south east
          }
        ]
        \addplot table[x=meshsize,y={create col/linear regression={y=err_sharp}}] {dat/adho/adho_0.001_0_mesh1.dat}
        coordinate [pos=0.75] (A)
        coordinate [pos=1.00] (B);
        \xdef\slopea{\pgfplotstableregressiona}
        \draw (A) -| (B) node[pos=0.75,anchor=east] {\pgfmathprintnumber{\slopea}};
        \addplot table[x=meshsize,y={create col/linear regression={y=err_sharp}}] {dat/adho/adho_0.001_1_mesh1.dat}
        coordinate [pos=0.75] (A)
        coordinate [pos=1.00] (B);
        \xdef\slopeb{\pgfplotstableregressiona}
        \draw (A) -| (B) node[pos=0.75,anchor=east] {\pgfmathprintnumber{\slopeb}};
        \addplot table[x=meshsize,y={create col/linear regression={y=err_sharp}}] {dat/adho/adho_0.001_2_mesh1.dat}
        coordinate [pos=0.75] (A)
        coordinate [pos=1.00] (B);
        \xdef\slopec{\pgfplotstableregressiona}
        \draw (A) -| (B) node[pos=0.75,anchor=east] {\pgfmathprintnumber{\slopec}};
        \addplot table[x=meshsize,y={create col/linear regression={y=err_sharp}}] {dat/adho/adho_0.001_3_mesh1.dat}
        coordinate [pos=0.75] (A)
        coordinate [pos=1.00] (B);
        \xdef\sloped{\pgfplotstableregressiona}
        \draw (A) -| (B) node[pos=0.75,anchor=east] {\pgfmathprintnumber{\sloped}};
        \legend{$k=0$,$k=1$,$k=2$,$k=3$};
      \end{loglogaxis}
    \end{tikzpicture}
    \subcaption{$\diff=10^{-3}$, triangular}
  \end{minipage}
  \begin{minipage}[b]{0.30\linewidth}
    \begin{tikzpicture}[scale=0.60]
      \begin{loglogaxis}[
          xmin=0.001,
          legend style = {
            legend pos = south east
          }
        ]
        \addplot table[x=meshsize,y={create col/linear regression={y=err_sharp}}] {dat/adho/adho_1_0_mesh1.dat}
        coordinate [pos=0.75] (A)
        coordinate [pos=1.00] (B);
        \xdef\slopea{\pgfplotstableregressiona}
        \draw (A) -| (B) node[pos=0.75,anchor=east] {\pgfmathprintnumber{\slopea}};
        \addplot table[x=meshsize,y={create col/linear regression={y=err_sharp}}] {dat/adho/adho_1_1_mesh1.dat}
        coordinate [pos=0.75] (A)
        coordinate [pos=1.00] (B);
        \xdef\slopeb{\pgfplotstableregressiona}
        \draw (A) -| (B) node[pos=0.75,anchor=east] {\pgfmathprintnumber{\slopeb}};
        \addplot table[x=meshsize,y={create col/linear regression={y=err_sharp}}] {dat/adho/adho_1_2_mesh1.dat}
        coordinate [pos=0.75] (A)
        coordinate [pos=1.00] (B);
        \xdef\slopec{\pgfplotstableregressiona}
        \draw (A) -| (B) node[pos=0.75,anchor=east] {\pgfmathprintnumber{\slopec}};
        \addplot table[x=meshsize,y={create col/linear regression={y=err_sharp}}] {dat/adho/adho_1_3_mesh1.dat}
        coordinate [pos=0.75] (A)
        coordinate [pos=1.00] (B);
        \xdef\sloped{\pgfplotstableregressiona}
        \draw (A) -| (B) node[pos=0.75,anchor=east] {\pgfmathprintnumber{\sloped}};
        \legend{$k=0$,$k=1$,$k=2$,$k=3$};
      \end{loglogaxis}
    \end{tikzpicture}
    \subcaption{$\diff=\pgfmathprintnumber{1}$, triangular}
  \end{minipage}
  \vspace{2ex}
  \\
  \begin{minipage}[b]{0.30\linewidth}
    \begin{tikzpicture}[scale=0.60]
      \begin{loglogaxis}[
          xmin = 0.0025,
          legend style = {
            legend pos = south east
          }
        ]
        \addplot table[x=meshsize,y={create col/linear regression={y=err_sharp}}] {dat/adho/adho_0_0_mesh4_1.dat}
        coordinate [pos=0.75] (A)
        coordinate [pos=1.00] (B);
        \xdef\slopea{\pgfplotstableregressiona}
        \draw (A) -| (B) node[pos=0.75,anchor=east] {\pgfmathprintnumber{\slopea}};
        \addplot table[x=meshsize,y={create col/linear regression={y=err_sharp}}] {dat/adho/adho_0_1_mesh4_1.dat}
        coordinate [pos=0.75] (A)
        coordinate [pos=1.00] (B);
        \xdef\slopeb{\pgfplotstableregressiona}
        \draw (A) -| (B) node[pos=0.75,anchor=east] {\pgfmathprintnumber{\slopeb}};
        \addplot table[x=meshsize,y={create col/linear regression={y=err_sharp}}] {dat/adho/adho_0_2_mesh4_1.dat}
        coordinate [pos=0.75] (A)
        coordinate [pos=1.00] (B);
        \xdef\slopec{\pgfplotstableregressiona}
        \draw (A) -| (B) node[pos=0.75,anchor=east] {\pgfmathprintnumber{\slopec}};
        \addplot table[x=meshsize,y={create col/linear regression={y=err_sharp}}] {dat/adho/adho_0_3_mesh4_1.dat}
        coordinate [pos=0.75] (A)
        coordinate [pos=1.00] (B);
        \xdef\sloped{\pgfplotstableregressiona}
        \draw (A) -| (B) node[pos=0.75,anchor=east] {\pgfmathprintnumber{\sloped}};
        \legend{$k=0$,$k=1$,$k=2$,$k=3$};
      \end{loglogaxis}
    \end{tikzpicture}
    \subcaption{$\diff=0$, Kershaw}
  \end{minipage}
  \begin{minipage}[b]{0.30\linewidth}
    \begin{tikzpicture}[scale=0.60]
      \begin{loglogaxis}[
          xmin = 0.0025,
          legend style = {
            legend pos = south east
          }
        ]
        \addplot table[x=meshsize,y={create col/linear regression={y=err_sharp}}] {dat/adho/adho_0.001_0_mesh4_1.dat}
        coordinate [pos=0.75] (A)
        coordinate [pos=1.00] (B);
        \xdef\slopea{\pgfplotstableregressiona}
        \draw (A) -| (B) node[pos=0.75,anchor=east] {\pgfmathprintnumber{\slopea}};
        \addplot table[x=meshsize,y={create col/linear regression={y=err_sharp}}] {dat/adho/adho_0.001_1_mesh4_1.dat}
        coordinate [pos=0.75] (A)
        coordinate [pos=1.00] (B);
        \xdef\slopeb{\pgfplotstableregressiona}
        \draw (A) -| (B) node[pos=0.75,anchor=east] {\pgfmathprintnumber{\slopeb}};
        \addplot table[x=meshsize,y={create col/linear regression={y=err_sharp}}] {dat/adho/adho_0.001_2_mesh4_1.dat}
        coordinate [pos=0.75] (A)
        coordinate [pos=1.00] (B);
        \xdef\slopec{\pgfplotstableregressiona}
        \draw (A) -| (B) node[pos=0.75,anchor=east] {\pgfmathprintnumber{\slopec}};
        \addplot table[x=meshsize,y={create col/linear regression={y=err_sharp}}] {dat/adho/adho_0.001_3_mesh4_1.dat}
        coordinate [pos=0.75] (A)
        coordinate [pos=1.00] (B);
        \xdef\sloped{\pgfplotstableregressiona}
        \draw (A) -| (B) node[pos=0.75,anchor=east] {\pgfmathprintnumber{\sloped}};
        \legend{$k=0$,$k=1$,$k=2$,$k=3$};
      \end{loglogaxis}
    \end{tikzpicture}
    \subcaption{$\diff=10^{-3}$, Kershaw}
  \end{minipage}
  \begin{minipage}[b]{0.30\linewidth}
    \begin{tikzpicture}[scale=0.60]
      \begin{loglogaxis}[
          xmin = 0.0025,
          legend style = {
            legend pos = south east
          }
        ]
        \addplot table[x=meshsize,y={create col/linear regression={y=err_sharp}}] {dat/adho/adho_1_0_mesh4_1.dat}
        coordinate [pos=0.75] (A)
        coordinate [pos=1.00] (B);
        \xdef\slopea{\pgfplotstableregressiona}
        \draw (A) -| (B) node[pos=0.75,anchor=east] {\pgfmathprintnumber{\slopea}};
        \addplot table[x=meshsize,y={create col/linear regression={y=err_sharp}}] {dat/adho/adho_1_1_mesh4_1.dat}
        coordinate [pos=0.75] (A)
        coordinate [pos=1.00] (B);
        \xdef\slopeb{\pgfplotstableregressiona}
        \draw (A) -| (B) node[pos=0.75,anchor=east] {\pgfmathprintnumber{\slopeb}};
        \addplot table[x=meshsize,y={create col/linear regression={y=err_sharp}}] {dat/adho/adho_1_2_mesh4_1.dat}
        coordinate [pos=0.75] (A)
        coordinate [pos=1.00] (B);
        \xdef\slopec{\pgfplotstableregressiona}
        \draw (A) -| (B) node[pos=0.75,anchor=east] {\pgfmathprintnumber{\slopec}};
        \addplot table[x=meshsize,y={create col/linear regression={y=err_sharp}}] {dat/adho/adho_1_3_mesh4_1.dat}
        coordinate [pos=0.75] (A)
        coordinate [pos=1.00] (B);
        \xdef\sloped{\pgfplotstableregressiona}
        \draw (A) -| (B) node[pos=0.75,anchor=east] {\pgfmathprintnumber{\sloped}};
        \legend{$k=0$,$k=1$,$k=2$,$k=3$};
      \end{loglogaxis}
    \end{tikzpicture}
    \subcaption{$\diff=\pgfmathprintnumber{1}$, Kershaw}
  \end{minipage}
  \vspace{2ex}
  \\ 
  \begin{minipage}[b]{0.30\linewidth}
    \begin{tikzpicture}[scale=0.60]
      \begin{loglogaxis}[
          xmin=0.0025,
          legend style = {
            legend pos = south east
          }
        ]
        \addplot table[x=meshsize,y={create col/linear regression={y=err_sharp}}] {dat/adho/adho_0_0_pi6_tiltedhexagonal.dat}
        coordinate [pos=0.75] (A)
        coordinate [pos=1.00] (B);
        \xdef\slopea{\pgfplotstableregressiona}
        \draw (A) -| (B) node[pos=0.75,anchor=east] {\pgfmathprintnumber{\slopea}};
        \addplot table[x=meshsize,y={create col/linear regression={y=err_sharp}}] {dat/adho/adho_0_1_pi6_tiltedhexagonal.dat}
        coordinate [pos=0.75] (A)
        coordinate [pos=1.00] (B);
        \xdef\slopeb{\pgfplotstableregressiona}
        \draw (A) -| (B) node[pos=0.75,anchor=east] {\pgfmathprintnumber{\slopeb}};
        \addplot table[x=meshsize,y={create col/linear regression={y=err_sharp}}] {dat/adho/adho_0_2_pi6_tiltedhexagonal.dat}
        coordinate [pos=0.75] (A)
        coordinate [pos=1.00] (B);
        \xdef\slopec{\pgfplotstableregressiona}
        \draw (A) -| (B) node[pos=0.75,anchor=east] {\pgfmathprintnumber{\slopec}};
        \addplot table[x=meshsize,y={create col/linear regression={y=err_sharp}}] {dat/adho/adho_0_3_pi6_tiltedhexagonal.dat}
        coordinate [pos=0.75] (A)
        coordinate [pos=1.00] (B);
        \xdef\sloped{\pgfplotstableregressiona}
        \draw (A) -| (B) node[pos=0.75,anchor=east] {\pgfmathprintnumber{\sloped}};
        \legend{$k=0$,$k=1$,$k=2$,$k=3$};
      \end{loglogaxis}
    \end{tikzpicture}
    \subcaption{$\diff=0$, hexagonal}
  \end{minipage}
  \begin{minipage}[b]{0.30\linewidth}
    \begin{tikzpicture}[scale=0.60]
      \begin{loglogaxis}[
          xmin=0.0025,
          legend style = {
            legend pos = south east
          }
        ]
        \addplot table[x=meshsize,y={create col/linear regression={y=err_sharp}}] {dat/adho/adho_0.001_0_pi6_tiltedhexagonal.dat}
        coordinate [pos=0.75] (A)
        coordinate [pos=1.00] (B);
        \xdef\slopea{\pgfplotstableregressiona}

        \draw (A) -| (B) node[pos=0.75,anchor=east] {\pgfmathprintnumber{\slopea}};
        \addplot table[x=meshsize,y={create col/linear regression={y=err_sharp}}] {dat/adho/adho_0.001_1_pi6_tiltedhexagonal.dat}
        coordinate [pos=0.75] (A)
        coordinate [pos=1.00] (B);
        \xdef\slopeb{\pgfplotstableregressiona}
        \draw (A) -| (B) node[pos=0.75,anchor=east] {\pgfmathprintnumber{\slopeb}};
        \addplot table[x=meshsize,y={create col/linear regression={y=err_sharp}}] {dat/adho/adho_0.001_2_pi6_tiltedhexagonal.dat}
        coordinate [pos=0.75] (A)
        coordinate [pos=1.00] (B);
        \xdef\slopec{\pgfplotstableregressiona}
        \draw (A) -| (B) node[pos=0.75,anchor=east] {\pgfmathprintnumber{\slopec}};
        \addplot table[x=meshsize,y={create col/linear regression={y=err_sharp}}] {dat/adho/adho_0.001_3_pi6_tiltedhexagonal.dat}
        coordinate [pos=0.75] (A)
        coordinate [pos=1.00] (B);
        \xdef\sloped{\pgfplotstableregressiona}
        \draw (A) -| (B) node[pos=0.75,anchor=east] {\pgfmathprintnumber{\sloped}};
        \legend{$k=0$,$k=1$,$k=2$,$k=3$};
      \end{loglogaxis}
    \end{tikzpicture}
    \subcaption{$\diff=10^{-3}$, hexagonal}
  \end{minipage}
  \begin{minipage}[b]{0.30\linewidth}
    \begin{tikzpicture}[scale=0.60]
      \begin{loglogaxis}[
          xmin=0.0025,
          legend style = {
            legend pos = south east
          }
        ]
        \addplot table[x=meshsize,y={create col/linear regression={y=err_sharp}}] {dat/adho/adho_1_0_pi6_tiltedhexagonal.dat}
        coordinate [pos=0.75] (A)
        coordinate [pos=1.00] (B);
        \xdef\slopea{\pgfplotstableregressiona}
        \draw (A) -| (B) node[pos=0.75,anchor=east] {\pgfmathprintnumber{\slopea}};
        \addplot table[x=meshsize,y={create col/linear regression={y=err_sharp}}] {dat/adho/adho_1_1_pi6_tiltedhexagonal.dat}
        coordinate [pos=0.75] (A)
        coordinate [pos=1.00] (B);
        \xdef\slopeb{\pgfplotstableregressiona}
        \draw (A) -| (B) node[pos=0.75,anchor=east] {\pgfmathprintnumber{\slopeb}};
        \addplot table[x=meshsize,y={create col/linear regression={y=err_sharp}}] {dat/adho/adho_1_2_pi6_tiltedhexagonal.dat}
        coordinate [pos=0.75] (A)
        coordinate [pos=1.00] (B);
        \xdef\slopec{\pgfplotstableregressiona}
        \draw (A) -| (B) node[pos=0.75,anchor=east] {\pgfmathprintnumber{\slopec}};
        \addplot table[x=meshsize,y={create col/linear regression={y=err_sharp}}] {dat/adho/adho_1_3_pi6_tiltedhexagonal.dat}
        coordinate [pos=0.75] (A)
        coordinate [pos=1.00] (B);
        \xdef\sloped{\pgfplotstableregressiona}
        \draw (A) -| (B) node[pos=0.75,anchor=east] {\pgfmathprintnumber{\sloped}};
        \legend{$k=0$,$k=1$,$k=2$,$k=3$};
      \end{loglogaxis}
    \end{tikzpicture}
    \subcaption{$\diff=\pgfmathprintnumber{1}$, hexagonal}
  \end{minipage}
\fi
  \caption{$\norm[\sharp,h]{\shu[h]-\su[h]}$-norm vs. $h$ for different mesh families (rows) and values of the diffusion coefficient $\diff$ (columns) in the test case of Section~\ref{sec:num.val:test1}\label{fix:num.val:test1}}
\end{figure}

\subsubsection{Locally degenerate diffusion}
\label{sec:num.val:test2}

\begin{figure}
  \centering
\ifDdP
  \includegraphics[]{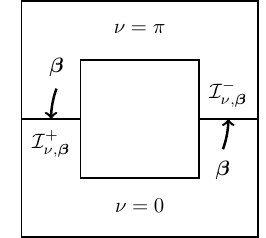} 
  \hspace{2ex}
  \includegraphics[height=4cm]{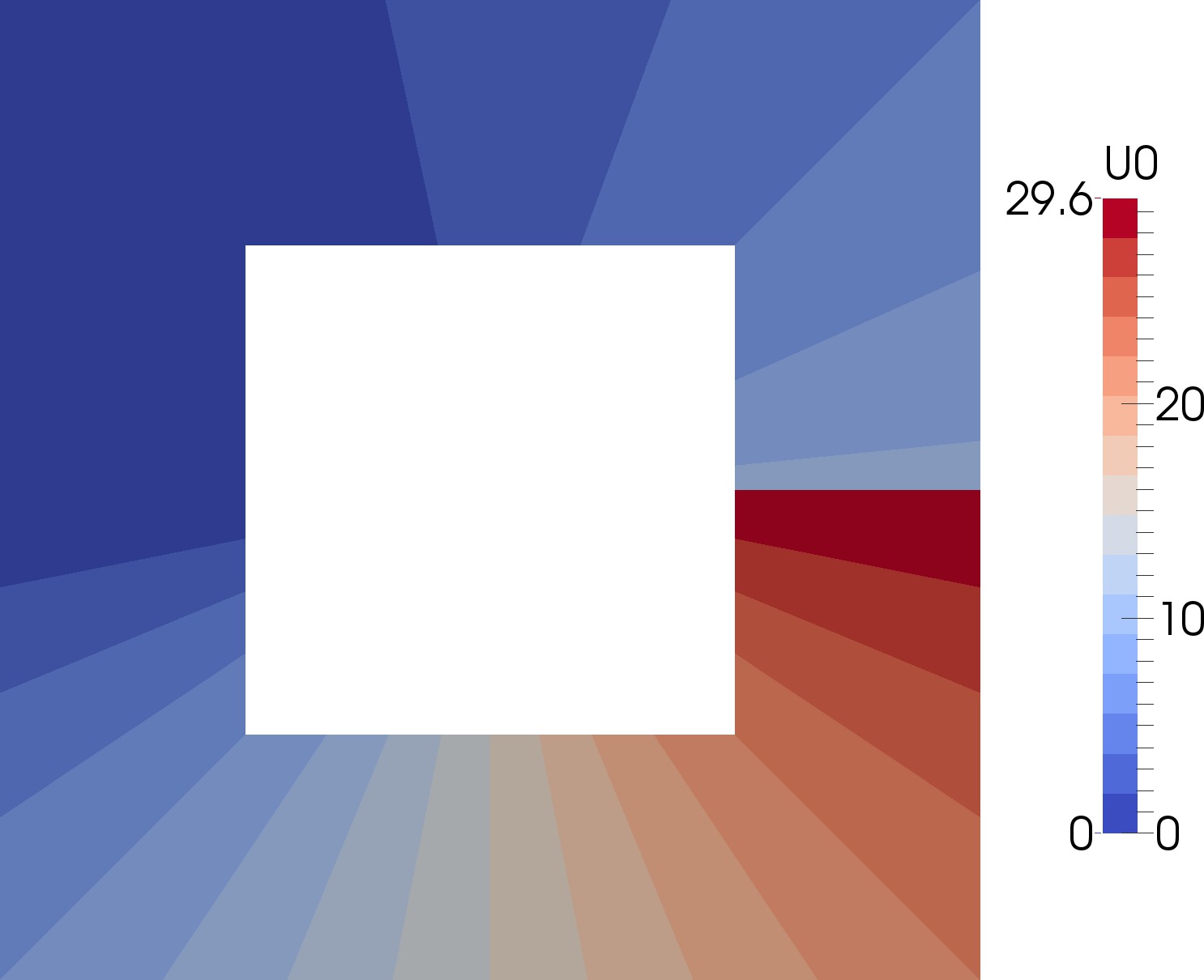}
\fi
  \caption{Configuration for the test case of Section~\ref{sec:num.val:test2} (left) and numerical solution for $k=3$ and $h=1.29{\times}10^{-2}$ (right). The jump discontinuity across $\Int[-]$ is clearly visible.\label{fig:dpeg}}
\end{figure}

To validate the method in the locally degenerate case, we consider the configuration originally proposed in~\cite[Section~6.1]{Di-Pietro.Ern.ea:08}, cf. Figure~\ref{fig:dpeg}.
The domain is $\Omega = (-1,1)^2\setminus [-0.5,0.5]^2$. Denoting by $(r,\theta)$ the standard polar coordinates (with azimuth $\theta$ measured counterclockwise starting from the positive $x$-axis) and by $\vec{e}_\theta$ the azimuthal vector, the problem coefficients are
$$
\diff(\theta,r) = \begin{cases} \pi & \text{if $0<\theta<\pi$,} \\ 0 & \text{if $\pi<\theta<2\pi$,}\end{cases}\quad
\vel(\theta,r) = \frac{\vec{e}_\theta}{r},\quad
\reac = 10^{-6},\quad
$$
The exact solution, also used to infer the value of the forcing term $f$ and boundary datum $g$, is given by
$$
u(\theta,r) = \begin{cases} (\theta-\pi)^2 & \text{if $0<\theta<\pi$,} \\ 
  3\pi(\theta-\pi) & \text{if $\pi<\theta<2\pi$.}\end{cases}
$$
In Figure~\ref{fig:num.val:test2} we show the convergence results for a refined family of triangular meshes.
The left panel displays the $L^2$-error on the potential measured by the quantity $\left\{\sum_{T\in\Th}\norm[T]{\unhu[T]-\unu[T]}^2\right\}^{\nicefrac12}$ with $\shu\eqbydef\Ih u$, while the right panel 
contains the error in the $\norm[\sharp,h]{{\cdot}}$-norm defined by~\eqref{eq:norm.h}.
In both cases the relative error is displayed and we have taken $\varsigma=1$.

\begin{figure}
 \centering
\ifDdP
 \begin{minipage}[b]{0.45\linewidth}
   \begin{tikzpicture}[scale=0.85]
     \begin{loglogaxis}[
         xmin = 1.5e-3, 
         ymax = 0,
         ymin = 1e-10,
         legend style = {
           legend pos = south east
         }
       ]
       \addplot table[x=meshsize,y={create col/linear regression={y=err_uh}}] {dat/dpeg/dpeg_0.dat}
       coordinate [pos=0.75] (A)
       coordinate [pos=1.00] (B);
       \xdef\slopea{\pgfplotstableregressiona}
       \draw (A) -| (B) node[pos=0.75,anchor=east] {\pgfmathprintnumber{\slopea}};
       \addplot table[x=meshsize,y={create col/linear regression={y=err_uh}}] {dat/dpeg/dpeg_1.dat}
       coordinate [pos=0.75] (A)
       coordinate [pos=1.00] (B);
       \xdef\slopeb{\pgfplotstableregressiona}
       \draw (A) -| (B) node[pos=0.75,anchor=east] {\pgfmathprintnumber{\slopeb}};
       \addplot table[x=meshsize,y={create col/linear regression={y=err_uh}}] {dat/dpeg/dpeg_2.dat}
       coordinate [pos=0.75] (A)
       coordinate [pos=1.00] (B);
       \xdef\slopec{\pgfplotstableregressiona}
       \draw (A) -| (B) node[pos=0.75,anchor=east] {\pgfmathprintnumber{\slopec}};
       \addplot table[x=meshsize,y={create col/linear regression={y=err_uh}}] {dat/dpeg/dpeg_3.dat}
       coordinate [pos=0.75] (A)
       coordinate [pos=1.00] (B);
       \xdef\sloped{\pgfplotstableregressiona}
       \draw (A) -| (B) node[pos=1.00,anchor=east] {\pgfmathprintnumber{\sloped}};
       \legend{$k=0$,$k=1$,$k=2$,$k=3$};
     \end{loglogaxis}
   \end{tikzpicture}
   \subcaption{$L^2$-error on $u$}
 \end{minipage}
 \begin{minipage}[b]{0.45\linewidth}
   \begin{tikzpicture}[scale=0.85]
     \begin{loglogaxis}[
         xmin = 1.5e-3, 
         ymax = 0,
         ymin = 1e-10,
         legend style = {
           legend pos = south east
         }
       ]
       \addplot table[x=meshsize,y={create col/linear regression={y=err_sharp}}] {dat/dpeg/dpeg_0.dat}
       coordinate [pos=0.75] (A)
       coordinate [pos=1.00] (B);
       \xdef\slopea{\pgfplotstableregressiona}
       \draw (A) -| (B) node[pos=0.75,anchor=east] {\pgfmathprintnumber{\slopea}};
       \addplot table[x=meshsize,y={create col/linear regression={y=err_sharp}}] {dat/dpeg/dpeg_1.dat}
       coordinate [pos=0.75] (A)
       coordinate [pos=1.00] (B);
       \xdef\slopeb{\pgfplotstableregressiona}
       \draw (A) -| (B) node[pos=0.75,anchor=east] {\pgfmathprintnumber{\slopeb}};
       \addplot table[x=meshsize,y={create col/linear regression={y=err_sharp}}] {dat/dpeg/dpeg_2.dat}
       coordinate [pos=0.75] (A)
       coordinate [pos=1.00] (B);
       \xdef\slopec{\pgfplotstableregressiona}
       \draw (A) -| (B) node[pos=0.75,anchor=east] {\pgfmathprintnumber{\slopec}};
       \addplot table[x=meshsize,y={create col/linear regression={y=err_sharp}}] {dat/dpeg/dpeg_3.dat}
       coordinate [pos=0.75] (A)
       coordinate [pos=1.00] (B);
       \xdef\sloped{\pgfplotstableregressiona}
       \draw (A) -| (B) node[pos=1.00,anchor=east] {\pgfmathprintnumber{\sloped}};
       \legend{$k=0$,$k=1$,$k=2$,$k=3$};
     \end{loglogaxis}
   \end{tikzpicture}
   \subcaption{$\norm[\sharp,h]{\shu[h]-\su[h]}$}
 \end{minipage}
\fi
 \caption{Convergence results for the locally degenerate test case of Section~\ref{sec:num.val:test2}\label{fig:num.val:test2}}
\end{figure}
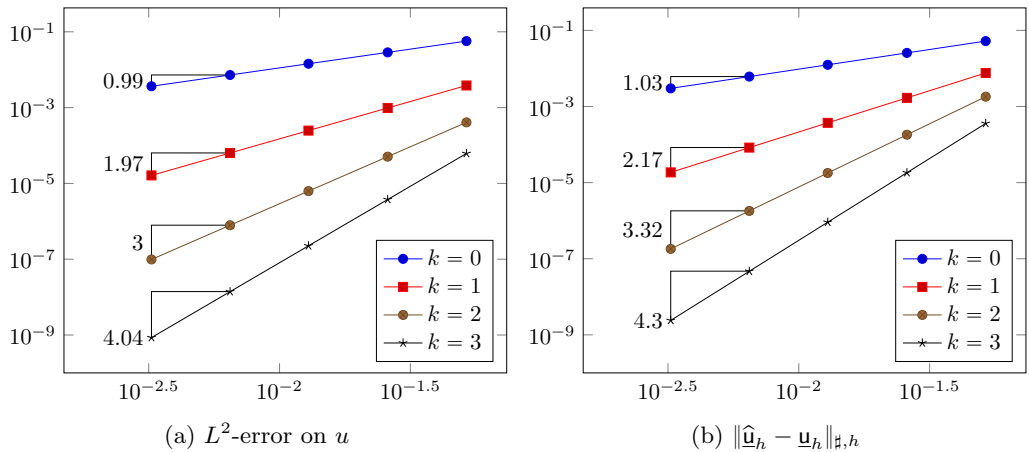

\section{Proofs}
\label{sec:analysis}

This section is concerned with the proof of our two main results: Lemma~\ref{lem:stab.ah} on stability and Theorem~\ref{thm:err.est} on the error estimate.
In what follows, we often abbreviate by $a\lesssim b$ the inequality $a\le C b$ with $C>0$ independent of $h$, $\diff$, $\vel$, and $\reac$, but possibly depending on $\varrho$, $d$, and $k$.

\subsection{Stability analysis}
\label{sec:proof_stab}
 
This section is organized as follows. First, we examine separately the coercivity of the diffusive and the advective-reactive bilinear forms. Combining these results readily yields the coercivity of the bilinear form $a_h$, see~\eqref{eq:coer.ah} in Lemma~\ref{lem:stab.ah}. Then, we prove the inf-sup condition~\eqref{eq:inf-sup.ah}.

\begin{lemma}[Stability of $a_{\diff,h}$]
  \label{lem:stab.cont.ah.nu}
Assume $\varsigma\ge1+\tfrac{C_{\rm tr}^2\Np}{2}$. Then, for all $\sv[h]\in\Uh$, the following holds:
  \begin{equation*}
    \label{eq:stab.cont.ah.nu}
    \frac12 \norm[\diff,h]{\sv[h]}^2 \lesssim a_{\diff,h}(\sv[h],\sv[h]).
  \end{equation*}
\end{lemma}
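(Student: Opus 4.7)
The plan is to test the bilinear form with $\sv[h]$ and control the cross boundary term by Young's inequality, using the discrete trace inequality~\eqref{eq:trace.disc} together with the upper bound $\diff[F]\le\diff[T]$.

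Starting from the identity
\begin{equation*}
  a_{\diff,h}(\sv[h],\sv[h]) = \sum_{T\in\Th}\norm[\diff,T]{\sv}^2
  - \sum_{F\in\Fhb}(\diff[F]\GRAD\pTF\sv\SCAL\normal_{TF},\unv[F])_F
  + \varsigma\seminorm[\diff,\partial\Omega]{\sv[h]}^2,
\end{equation*}
the first step is to estimate the cross term. For each $F\in\Fhb$, Young's inequality with parameter $\alpha>0$ gives
\begin{equation*}
  |(\diff[F]\GRAD\pTF\sv\SCAL\normal_{TF},\unv[F])_F|
  \le \frac{\alpha}{2}\diff[F] h_F\norm[F]{\GRAD\pTF\sv\SCAL\normal_{TF}}^2
  + \frac{1}{2\alpha}\frac{\diff[F]}{h_F}\norm[F]{\unv[F]}^2.
\end{equation*}
Since $\GRAD\pTF\sv$ is polynomial of degree $k$ on $T=T(F)$, the discrete trace inequality~\eqref{eq:trace.disc} and the bound $\diff[F]\le\diff[T]$ yield
\begin{equation*}
  \diff[F] h_F\norm[F]{\GRAD\pTF\sv\SCAL\normal_{TF}}^2
  \le C_{\rm tr}^2 \diff[T]\norm[T]{\GRAD\pTF\sv}^2
  \le C_{\rm tr}^2\norm[\diff,T]{\sv}^2,
\end{equation*}
where the last bound follows from the definition~\eqref{eq:aT.nu} of $\norm[\diff,T]{\sv}^2 = a_{\diff,T}(\sv,\sv)$ which includes the Galerkin term $\diff[T]\norm[T]{\GRAD\pT\sv}^2$.

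Summing over $F\in\Fhb$ and using~\eqref{eq:Np} to count each cell contribution at most $\Np$ times, I obtain
\begin{equation*}
  \left|\sum_{F\in\Fhb}(\diff[F]\GRAD\pTF\sv\SCAL\normal_{TF},\unv[F])_F\right|
  \le \frac{\alpha C_{\rm tr}^2\Np}{2}\sum_{T\in\Th}\norm[\diff,T]{\sv}^2
  + \frac{1}{2\alpha}\seminorm[\diff,\partial\Omega]{\sv[h]}^2.
\end{equation*}
Choosing $\alpha=(C_{\rm tr}^2\Np)^{-1}$ makes the first coefficient $\tfrac12$ and the second $\tfrac{C_{\rm tr}^2\Np}{2}$. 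Plugging back, I get
\begin{equation*}
  a_{\diff,h}(\sv[h],\sv[h]) \ge \tfrac12\sum_{T\in\Th}\norm[\diff,T]{\sv}^2
  + \Bigl(\varsigma-\tfrac{C_{\rm tr}^2\Np}{2}\Bigr)\seminorm[\diff,\partial\Omega]{\sv[h]}^2,
\end{equation*}
and the assumption $\varsigma\ge 1 + \tfrac{C_{\rm tr}^2\Np}{2}$ makes the boundary coefficient at least $1\ge\tfrac12$, so the right-hand side dominates $\tfrac12\norm[\diff,h]{\sv[h]}^2$.

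There is no real obstacle here: the argument is the classical Nitsche-type coercivity proof, and the threshold on $\varsigma$ is dictated exactly by the discrete trace constant $C_{\rm tr}$ and the maximal cell-face cardinality $\Np$. The only care needed is to apply the trace inequality on the correct cell $T(F)$ and to exploit the built-in control of $\diff[T]\norm[T]{\GRAD\pT\sv}^2$ inside $\norm[\diff,T]{\sv}^2$.
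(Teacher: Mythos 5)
Your proof is correct and follows essentially the same route as the paper: both control the boundary consistency term via the discrete trace inequality \eqref{eq:trace.disc}, the bound $\diff[F]\le\diff[T]$, and the face-count \eqref{eq:Np}, then absorb it into the Galerkin and penalty terms using the threshold on $\varsigma$. The only (cosmetic) difference is that you apply Young's inequality face by face with an explicit parameter $\alpha$, whereas the paper uses a global Cauchy--Schwarz step followed by the quadratic-form inequality $x^2-2Axy+By^2\ge\tfrac{B-A^2}{1+B}(x^2+y^2)$; both yield the same constant $\tfrac12$.
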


\begin{proof}
We use
the Cauchy--Schwarz inequality, the discrete trace~\eqref{eq:trace.disc} inequalities, the definition~\eqref{eq:norm.nu}
of the $\norm[\diff,T]{{\cdot}}$-seminorm, and we recall~\eqref{eq:Np} to obtain
\begin{equation*}
  \label{est:a.nu.bdr}
  \begin{aligned}
    \Bigg|\sum_{F\in\Fhb} (\diff[F]\GRAD\pTF\sv[T(F)]\SCAL\normal_{TF},\unv[F])_F\Bigg|
    &\le
    \left\{\sum_{F\in\Fhb} h_F\norm[F]{\diff[F]^{\nicefrac12}\GRAD\pTF\sv[T(F)]}^2\right\}^{\nicefrac12}
    \seminorm[\diff,\partial\Omega]{\sv[h]}
    \\
    &\le
    C_{\rm tr}\Np^{\nicefrac12}
    \left\{\sum_{T\in\Th}\norm[\nu,T]{\sv}^2\right\}^{\nicefrac12}
    \seminorm[\diff,\partial\Omega]{\sv[h]}.
  \end{aligned}
\end{equation*}
  Hence,
  \begin{equation}
    \label{eq:stab.cont.ah.nu:1}
    a_h(\sv[h],\sv[h])
    \ge 
    \sum_{T\in\Th}\norm[\diff,T]{\sv}^2
    - C_{\rm tr}\Np^{\nicefrac12}\left\{\sum_{T\in\Th}\norm[\nu,T]{\sv}^2\right\}^{\nicefrac12}
    \seminorm[\diff,\partial\Omega]{\sv[h]}
    + \varsigma\seminorm[\diff,\partial\Omega]{\sv[h]}^2.
  \end{equation}
  For a real number $A>0$, assuming $B>A^2$, the following inequality holds for all positive real numbers $x,y$:
  $x^2-2Axy+By^2\ge\frac{B-A^2}{1+B}(x^2+y^2)$.
  Using this result with $x=\left\{\sum_{T\in\Th}\norm[\diff,T]{\sv}^2\right\}^{\nicefrac12}$, $y=\seminorm[\diff,\partial\Omega]{\sv[h]}$, $2A=C_{\rm tr}\Np^{\nicefrac12}$, and $B=\varsigma$ in the right-hand side of~\eqref{eq:stab.cont.ah.nu:1} yields the assertion since $\frac{B-A^2}{1+B}\ge\frac12$.
\end{proof}

\begin{lemma}[Stability of $a_{\vel,\reac,h}$]
  \label{lem:stab:ah.vel.mu}
  Assume \ASSUM{A}{1}--\ASSUM{A}{3}. 
  The following holds for all $\sv[h]\in\Uh$:
  \begin{equation*}
    \label{eq:stab:ah.vel.mu}
    \eta \norm[\vel,\reac,h]{\sv[h]}^2
    \le a_{\vel,\reac,h}(\sv[h],\sv[h]),
  \end{equation*}
with $\eta\eqbydef\min_{T\in\Th}(1,\tc\reac_0)$.
\end{lemma}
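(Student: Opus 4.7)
\medskip
\noindent\textbf{Proof plan for Lemma~\ref{lem:stab:ah.vel.mu}.} The plan is to test $a_{\vel,\reac,h}$ with $\sv[h]$ itself and massage the resulting expression so that each piece of the $\norm[\vel,\reac,h]{\cdot}$-norm appears as a nonnegative contribution. The whole calculation rests on the identity $\Am(s)=\tfrac12(\Aabs(s)-s)$ combined with the assumption $\DIV\vel\equiv 0$ and a discrete integration by parts that compensates the skew-symmetric part of the advective term.

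First I would treat the cell contribution $a_{\vel,\reac,T}(\sv,\sv)$ separately. Using the equivalent form~\eqref{eq:GBT} of $\GBT$ with $w=\unv[T]$, one gets
\[
-(\unv[T],\GBT\sv)_T
= (\unv[T],\vel\SCAL\GRAD\unv[T])_T - \sum_{F\in\Fh[T]}\int_F(\vel\SCAL\normal_{TF})\,\unv[F]\unv[T].
\]
Since $\DIV\vel\equiv 0$, one has $(\unv[T],\vel\SCAL\GRAD\unv[T])_T=\tfrac12\sum_{F\in\Fh[T]}\int_F(\vel\SCAL\normal_{TF})\unv[T]^2$. Writing $\tfrac{\diff[F]}{h_F}\Am(\Pe_{TF})=\tfrac12\bigl(\tfrac{\diff[F]}{h_F}\Aabs(\Pe_{TF})-\vel\SCAL\normal_{TF}\bigr)$ (the case $\diff[F]=0$ being handled by~\eqref{eq:def:Apmdeg}) and expanding the square $(\unv[F]-\unv[T])^2$ in $s_{\vel,T}\pneg(\sv,\sv)$, the local cross-terms in $\vel\SCAL\normal_{TF}$ collapse exactly to $-\tfrac12\sum_{F\in\Fh[T]}\int_F(\vel\SCAL\normal_{TF})\unv[F]^2$, and one obtains the key identity
\[
-(\unv[T],\GBT\sv)_T + s_{\vel,T}\pneg(\sv,\sv)
= \tfrac12\sum_{F\in\Fh[T]}\norm[F]{\bigl[\tfrac{\diff[F]}{h_F}\Aabs(\Pe_{TF})\bigr]^{\nicefrac12}(\unv[F]-\unv[T])}^2
-\tfrac12\sum_{F\in\Fh[T]}\int_F(\vel\SCAL\normal_{TF})\unv[F]^2.
\]

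Next I sum this identity over $T\in\Th$. Because face unknowns are single-valued and $\normal_{T_1F}=-\normal_{T_2F}$ across any interface $F\in\Fhi$, the rightmost integrals cancel at interior faces and only the boundary faces survive:
\[
-\tfrac12\sum_{T\in\Th}\sum_{F\in\Fh[T]}\int_F(\vel\SCAL\normal_{TF})\unv[F]^2
= -\tfrac12\sum_{F\in\Fhb}\int_F(\vel\SCAL\normal)\,\unv[F]^2.
\]
Adding the boundary penalty $\sum_{F\in\Fhb}(\tfrac{\diff[F]}{h_F}\Ap(\Pe_F)\unv[F],\unv[F])_F$ and rewriting $\tfrac{\diff[F]}{h_F}\Ap(\Pe_F)=\tfrac12(\tfrac{\diff[F]}{h_F}\Aabs(\Pe_F)+\vel\SCAL\normal)$ cancels the surviving $\vel\SCAL\normal$ boundary contribution and produces exactly $\seminorm[\vel,\partial\Omega]{\sv[h]}^2$. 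Finally, the reaction term satisfies $(\reac\unv[T],\unv[T])_T\ge \tc\reac_0\cdot\tc^{-1}\norm[T]{\unv[T]}^2$ thanks to $\mu\ge\mu_0$ and the definition of $\tc$. Collecting all the pieces and comparing with the definition~\eqref{eq:norm.vel.mu} of $\norm[\vel,\reac,h]{\cdot}$ yields the claim with $\eta=\min_{T\in\Th}(1,\tc\reac_0)$.

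The only nontrivial step is the algebraic identity in the displayed equation above: it is where the design choice $s_{\vel,T}\pneg$ (rather than $s_{\vel,T}^+$) in $a_{\vel,\reac,T}$ is essential, as it is precisely the combination that turns the skew-symmetric part of the advection into a clean interior-face cancellation while leaving the dissipative $|A|$-weighted jump term. The boundary algebra requires matching $\Ap$ on the penalty with $\Am$ on the right-hand side~\eqref{eq:lh}, but for coercivity only the $\Ap$ piece is used and the sign works out because $\Aabs\ge 0$ by \ASSUM{A}{1}.
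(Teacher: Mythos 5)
Your proof is correct and follows essentially the same route as the paper's: both hinge on the identity $\Apm(s)=\tfrac12(\Aabs(s)\pm s)$, the divergence-free assumption, and the single-valuedness of face unknowns to cancel the skew-symmetric advective contributions at interior faces, leaving the $\Aabs$-weighted jump and boundary terms plus the reaction term. The only difference is organizational: you complete the square cell by cell and then sum, whereas the paper first establishes the global discrete integration-by-parts identity~\eqref{eq:GBT.ibp} for a generic pair $(\sw[h],\sv[h])$ (which it reuses elsewhere) and then specializes to $\sw[h]=\sv[h]$ --- the underlying cancellations are identical.
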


\begin{proof}
\emph{Step 1}. Let us first prove that, for all $\sw[h],\sv[h]\in\Uh$, the following holds:
  \begin{multline}
    \label{eq:GBT.ibp}
    \sum_{T\in\Th}\Big\{
    (\GBT\sw,\unv[T])_T + (\unw[T],\GBT\sv)_T
    \Big\}
    \\
    =-\sum_{T\in\Th}\sum_{F\in\Fh}((\vel\SCAL\normal_{TF})(\unw[F]-\unw[T]),\unv[F]-\unv[T])_F
    +\sum_{F\in\Fhb}((\vel\SCAL\normal_F)\unw[F],\unv[F])_F.
  \end{multline}
  For all $T\in\Th$, we use~\eqref{eq:GBT.bis} with $\sv=\sw$ and $w=\unv[T]$ and~\eqref{eq:GBT} with $w=\unw[T]$ to infer that
  \begin{equation}
    \label{eq:GBT.ibp:1}
    \begin{aligned}
      &\sum_{T\in\Th}(\GBT\sw,\unv[T])_T
      \\
      &\quad=
      \sum_{T\in\Th}\left\{
      (\vel\SCAL\GRAD\unw[T],\unv[T])_T
      + \sum_{F\in\Fh[T]}((\vel\SCAL\normal_{TF})(\unw[F]-\unw[T]),\unv[T])_F
      \right\}
      \\
      &\quad=
      \sum_{T\in\Th}\left\{
      -(\unw[T],\GBT\sv)_T
      +\sum_{F\in\Fh[T]}((\vel\SCAL\normal_{TF})\unw[T],\unv[F])_F
      + \sum_{F\in\Fh[T]}((\vel\SCAL\normal_{TF})(\unw[F]-\unw[T]),\unv[T])_F
      \right\}.
    \end{aligned}
  \end{equation}
  Formula~\eqref{eq:GBT.ibp} follows adding to the right-hand side of~\eqref{eq:GBT.ibp:1} the quantity
  \begin{equation}
    \label{eq:GBT.ibp:2}
    0 =\sum_{F\in\Fhb}((\vel\SCAL\normal_F)\unw[F],\unv[F])_F
    -\sum_{T\in\Th}\sum_{F\in\Fh[T]}((\vel\SCAL\normal_{TF})\unw[F],\unv[F])_F.
  \end{equation}
  To prove~\eqref{eq:GBT.ibp:2}, we observe that, rearranging the sums, 
  $$
  \sum_{T\in\Th}\sum_{F\in\Fh[T]}
  ((\vel\SCAL\normal_{TF})\unw[F],\unv[F])_F
  = \sum_{F\in\Fhi}\sum_{T\in\Th[F]}  ((\vel\SCAL\normal_{TF})\unw[F],\unv[F])_F
  + \sum_{F\in\Fhb}((\vel\SCAL\normal_F)\unw[F],\unv[F])_F.
  $$
  Using, for all $F\in\Fhi$ such that $F\subset\partial T_1\cap\partial T_2$, the fact that $\vel\SCAL\normal_{T_1F}=-\vel\SCAL\normal_{T_2F}=\vel\SCAL\normal_F$, we infer that the first addend in the right-hand side is zero. 

\noindent\emph{Step 2}. Owing to~\eqref{def:Peclet} (see also \eqref{eq:def:Apmdeg} if $\diff[F]=0$) and since $\Ap(s)-\Am(s)=s$,
we observe that, for all $T\in\Th$ and all $F\in\Fh[T]$,
\begin{equation}\label{eq:Ap-Am=Id}
\tfrac{\diff[F]}{h_F}\Ap(\Pe_{TF})-\tfrac{\diff[F]}{h_F}\Am(\Pe_{TF})
=\vel\SCAL\normal_{TF}.
\end{equation}
Owing to~\eqref{eq:GBT.ibp}, we infer that, for all $\sw[h],\sv[h]\in\Uh$,
\begin{subequations}
  \label{eq:ah.vel.mu:formulas}
  \begin{align}
    \label{eq:ah.vel.mu:orig}
    &a_{\vel,\reac,h}(\sw[h],\sv[h])\nonumber
    \\
    &=\sum_{T\in\Th}\left\{
    -(\unw[T],\GBT\sv)_T  
    +(\reac\unw[T],\unv[T])_T
    \right\}
    +s_{\vel,h}\pneg(\sw[h],\sv[h])
    +\sum_{F\in\Fhb}(\tfrac{\diff[F]}{h_F}\Ap(\Pe_F)\unw[F],\unv[F])_F
    \\
    \label{eq:ah.vel.mu:bis}
    &\;=\sum_{T\in\Th}\left\{
    (\GBT\sw,\unv[T])_T
    +(\reac\unw[T],\unv[T])_T
    \right\}
    +s_{\vel,h}\ppos(\sw[h],\sv[h])
    +\sum_{F\in\Fhb}(\tfrac{\diff[F]}{h_F}\Am(\Pe_F)\unw[F],\unv[F])_F,
  \end{align}
\end{subequations}
where the global stabilization bilinear forms $s_{\vel,h}^\pm$ on $\Uh\times\Uh$ are assembled element-wise by setting $s_{\vel,h}^\pm(\sw[h],\sv[h])\eqbydef\sum_{T\in\Th} s_{\vel,T}^\pm(\sw,\sv)$.

\noindent\emph{Step 3}. Let $\sv[h]\in\Uh$.
  Using~\eqref{eq:GBT.ibp} with $\sw[h]=\sv[h]$ and \eqref{eq:Ap-Am=Id}, we infer that
  \begin{equation}
    \label{eq:stab:ah.vel.mu:1}
    \begin{aligned}
      -\sum_{T\in\Th}(\GBT\sv,\unv[T])_T
      &= \sum_{T\in\Th}\sum_{F\in\Fh[T]}
      \left(\tfrac{\frac{\diff[F]}{h_F}\Ap(\Pe_{TF})-\frac{\diff[F]}{h_F}\Am(\Pe_{TF})}{2}(\unv[F]-\unv[T]),\unv[F]-\unv[T]\right)_F
      \\
      &\quad -\sum_{F\in\Fhb}\left(\tfrac{\frac{\diff[F]}{h_F}\Ap(\Pe_F)-\frac{\diff[F]}{h_F}\Am(\Pe_F)}{2}\unv[F],\unv[F]\right)_F.
    \end{aligned}
  \end{equation}
  Taking $\sw[h]=\sv[h]$ in~\eqref{eq:ah.vel.mu:orig} and using~\eqref{eq:stab:ah.vel.mu:1} to substitute the first term in the right-hand side, we obtain
  $$
  \begin{aligned}
    a_{\vel,\reac,h}(\sv[h],\sv[h])
    &\ge\sum_{T\in\Th}\left\{
    \sum_{F\in\Fh[T]}\left(\tfrac{\frac{\diff[F]}{h_F}\Ap(\Pe_{TF})+\frac{\diff[F]}{h_F}\Am(\Pe_{TF})}{2}(\unv[F]-\unv[T]),\unv[F]-\unv[T]\right)_F
    +\reac_0\norm[T]{\unv[T]}^2
    \right\}
    \\
    &\quad+\sum_{F\in\Fhb}\left(\tfrac{\frac{\diff[F]}{h_F}\Ap(\Pe_F)+\frac{\diff[F]}{h_F}\Am(\Pe_F)}{2}\unv[F],\unv[F]\right)_F,
  \end{aligned}
$$
and the conclusion follows recalling~\eqref{eq:Lvel.tc.velc} and since
  $\Aabs(s)=\Ap(s)+\Am(s)$.
\end{proof}

\begin{proof}[Proof of~\eqref{eq:coer.ah}]
Sum the results of Lemmas~\ref{lem:stab.cont.ah.nu} and~\ref{lem:stab:ah.vel.mu}.
\end{proof}

\begin{proof}[Proof of the inf-sup condition~\eqref{eq:inf-sup.ah}]
  The proof hinges on the use of the locally scaled advective derivative as a test function, an idea which can be found, e.g., in the work of Johnson and Pitk\"{a}ranta~\cite{Johnson.Pitkaranta:86}.
  We denote by $\$$ the supremum in the right-hand side of~\eqref{eq:inf-sup.ah}.
  Let $\sw[h]\in\Uh$ and define $\sv[h]\in\Uh$ such that,
  \begin{equation}
    \label{eq:inf-sup.ah:svh}
    \unv[T]=h_T\velc^{-1}(\GBT\sw)\quad\forall T\in\Th,
    \qquad
    \unv[F]\equiv 0\quad \forall F\in\Fh.
  \end{equation}
  The following result is proved in Lemma~\ref{lem:bnd.vh.wh}:
  \begin{equation}
    \label{eq:inf-sup.ah:norm.svh}
    \norm[\sharp,h]{\sv[h]}\lesssim\norm[\sharp,h]{\sw[h]}.
  \end{equation}
  Using~\eqref{eq:inf-sup.ah:svh} in~\eqref{eq:ah} and recalling~\eqref{eq:ah.vel.mu:bis}, it is inferred that
  \begin{equation}
    \label{eq:inf-sup.ah:0}
    \begin{aligned}
      \sum_{T\in\Th} h_T\velc^{-1}\norm[T]{\GBT\sw}^2
      &= a_h(\sw[h],\sv[h]) 
      -a_{\diff,h}(\sw[h],\sv[h])       
      -\sum_{T\in\Th}(\reac\unw[T],\unv[T])_T
      \\
      &\quad
      -s_{\vel,h}\ppos(\sw[h],\sv[h])
      -\sum_{F\in\Fhb}(\tfrac{\diff[F]}{h_F}\Am(\Pe_F)\unw[F],\unv[F])_F.
    \end{aligned}
  \end{equation}
  Denote by $\term_1,\ldots,\term_5$ the addends in the right-hand side of~\eqref{eq:inf-sup.ah:0}.
  Using~\eqref{eq:inf-sup.ah:norm.svh}, we have 
  \begin{equation}
    \label{eq:inf-sup.ah:T1}
    |\term_1|\le\$\norm[\sharp,h]{\sv[h]}\lesssim\$\norm[\sharp,h]{\sw[h]}.
  \end{equation}
	Since $\unv[F]=0$ for any face $F$, using the Cauchy--Schwarz inequality on
the positive semi-definite bilinear form $a_{\diff,T}$ and recalling the definition
\eqref{eq:norm.nu} of $\norm[\diff,h]{{\cdot}}$, it is inferred, thanks to
\eqref{eq:inf-sup.ah:norm.svh}, that
  \begin{equation}
    \label{eq:inf-sup.ah:T2}
    |\term_2|
    \le\norm[\diff,h]{\sw[h]}\norm[\diff,h]{\sv[h]}
    \lesssim\norm[\flat,h]{\sw[h]}\norm[\sharp,h]{\sw[h]}.
  \end{equation}
  The estimate on $\term_3$ is trivial:
  \begin{equation}
    \label{eq:inf-sup.ah:T3}
    \begin{aligned}
      |\term_3|&\lesssim
      \norm[\vel,\reac,h]{\sw[h]}\norm[\vel,\reac,h]{\sv[h]}
      \lesssim\norm[\flat,h]{\sw[h]}\norm[\sharp,h]{\sw[h]}.
    \end{aligned}
  \end{equation}
  Let us now turn to $\term_4$. Using Remark \ref{rem:A2} (if $\diff[F]>0$)
and \ASSUM{A}{3} (otherwise) we see that
 	\begin{equation*}
	|\tfrac{\diff[F]}{h_F}\Apm(\Pe_{TF})|\lesssim \frac{\diff[F]}{h_F}+
	\frac{\diff[F]}{h_F}\Aabs(\Pe_{TF}).
	\label{eq:estApAabs}
	\end{equation*}
	Using the fact that $\diff[F]\le \diff[T]$ and $h_T\lesssim h_F$ owing to~\eqref{eq:hF}
whenever $F\in\Fh[T]$,
	the norm equivalence \eqref{eq:equiv:norms},
  the Cauchy--Schwarz inequality, and definition~\eqref{eq:norm.vel.mu} of the advective seminorm $\norm[\vel,\reac,h]{{\cdot}}$, we therefore find
    \begin{align}
      |\term_4|&\le
      \sum_{T\in\Th}\sum_{F\in\Fh[T]}(|\tfrac{\diff[F]}{h_F}\Ap(\Pe_{TF})|\,|\unw[F]-\unw[T]|,|\unv[F]-\unv[T]|)_F
			\nonumber\\
      &\lesssim \sum_{T\in\Th}\sum_{F\in\Fh[T]}\tfrac{\diff[T]}{h_T}(|\unw[F]-\unw[T]|,|\unv[F]-\unv[T]|)_F
      \nonumber\\
      &\qquad
      + \sum_{T\in\Th}\sum_{F\in\Fh[T]}(\tfrac{\diff[F]}{h_F}\Aabs(\Pe_{TF})\,|\unw[F]-\unw[T]|,|\unv[F]-\unv[T]|)_F
			\nonumber\\
      &\lesssim \norm[\diff,h]{\sw[h]}\norm[\diff,h]{\sv[h]}
      +  \norm[\vel,\reac,h]{\sw[h]}\norm[\vel,\reac,h]{\sv[h]}
      \lesssim\norm[\flat,h]{\sw[h]}\norm[\sharp,h]{\sv[h]}.
    \label{eq:inf-sup.ah:T4}
    \end{align}
  Proceeding similarly, it is inferred for $\term_5$ that
    \begin{equation}
    \label{eq:inf-sup.ah:T5}
    |\term_5|\lesssim \norm[\flat,h]{\sw[h]}\norm[\sharp,h]{\sw[h]}.
    \end{equation}
  Hence, using~\eqref{eq:inf-sup.ah:T1}--\eqref{eq:inf-sup.ah:T5} to bound the right-hand side of~\eqref{eq:inf-sup.ah:0}, we obtain
  \begin{equation}
    \label{eq:inf-sup.ah:1}
    \sum_{T\in\Th}h_T\velc^{-1}\norm[T]{\GBT\sw}^2
    \lesssim
    \$\norm[\sharp,h]{\sw[h]} +\norm[\flat,h]{\sw[h]}\norm[\sharp,h]{\sw[h]}.
  \end{equation}
  Adding $\norm[\flat,h]{\sw[h]}^2$ to both sides of inequality~\eqref{eq:inf-sup.ah:1}, and observing that, as a consequence of~\eqref{eq:coer.ah}, 
  \begin{equation}
    \label{eq:inf-sup.ah:2}
    \norm[\flat,h]{\sw[h]}^2 \le \xi^{-1}
    \frac{a_h(\sw[h],\sw[h])}{\norm[\sharp,h]{\sw[h]}}\norm[\sharp,h]{\sw[h]}
    \le \xi^{-1}\$\norm[\sharp,h]{\sw[h]},
  \end{equation}
  we infer the existence of $C$ depending on $\varrho$, $d$, and $k$, 
  but independent of $h$, $\diff$, $\vel$, and $\reac$, such that
  \[
  C\norm[\sharp,h]{\sw[h]}^2
  \le
  \xi^{-1}\$\norm[\sharp,h]{\sw[h]} + 
  \norm[\flat,h]{\sw[h]}\norm[\sharp,h]{\sw[h]}
  \le
  \xi^{-1}\$\norm[\sharp,h]{\sw[h]}
  + \frac{1}{2C}\norm[\flat,h]{\sw[h]}^2
  + \frac{C}{2}\norm[\sharp,h]{\sw[h]}^2,
  \]
  and the result follows using again~\eqref{eq:inf-sup.ah:2} for the second term in the right-hand side.
\end{proof}

\begin{lemma}
  \label{lem:bnd.vh.wh} Under the assumptions of Lemma \ref{lem:stab.ah},
  let $\sw[h]\in\Uh$ and $\sv[h]\in\Uh$ be defined as in~\eqref{eq:inf-sup.ah:svh}. 
  Then, \eqref{eq:inf-sup.ah:norm.svh} holds.
\end{lemma}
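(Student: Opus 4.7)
The strategy is to bound each cell contribution to $\norm[\sharp,h]{\sv[h]}^2$ separately in terms of the corresponding pieces of $\norm[\sharp,h]{\sw[h]}^2$. Since $\unv[F]\equiv 0$ for every $F\in\Fh$, the boundary seminorms $\seminorm[\diff,\partial\Omega]{\sv[h]}$ and $\seminorm[\vel,\partial\Omega]{\sv[h]}$ vanish at once, so it only remains to control, cell by cell, the diffusive term $\norm[\diff,T]{\sv}^2$, the advection-reaction term $\norm[\vel,\reac,T]{\sv}^2$, and the higher-order piece $h_T\velc^{-1}\norm[T]{\GBT\sv}^2$.

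For the last term, since $\GBT\sv\in\Poly{k}(T)$, a duality argument combined with formula~\eqref{eq:GBT.bis} (whose face contributions simplify thanks to $\unv[F]=0$), the inverse inequality~\eqref{eq:inv}, the discrete trace inequality~\eqref{eq:trace.disc}, and the bound~\eqref{eq:Np} on $\card{\Fh[T]}$ yields the self-scaling estimate $\norm[T]{\GBT\sv}\lesssim \velc\, h_T^{-1}\norm[T]{\unv[T]}$; plugging in $\unv[T]=h_T\velc^{-1}\GBT\sw$ then gives exactly $h_T\velc^{-1}\norm[T]{\GBT\sv}^2 \lesssim h_T\velc^{-1}\norm[T]{\GBT\sw}^2$. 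For the diffusive term, the equivalence~\eqref{eq:equiv:norms} reduces the task to controlling $\diff[T]\norm[T]{\GRAD\unv[T]}^2 + \sum_{F\in\Fh[T]}\tfrac{\diff[T]}{h_T}\norm[F]{\unv[T]}^2$, and both addends are bounded by $\diff[T]h_T^{-2}\norm[T]{\unv[T]}^2 = \diff[T]\velc^{-2}\norm[T]{\GBT\sw}^2$ via~\eqref{eq:inv}--\eqref{eq:trace.disc}. A second duality argument on $\GBT\sw$ based on~\eqref{eq:GBT.bis} then delivers $\norm[T]{\GBT\sw}^2\lesssim \velc^2\bigl(\norm[T]{\GRAD\unw[T]}^2 + \sum_{F\in\Fh[T]} h_F^{-1}\norm[F]{\unw[F]-\unw[T]}^2\bigr)$, and the conclusion $\norm[\diff,T]{\sv}^2\lesssim \norm[\diff,T]{\sw}^2$ follows by invoking~\eqref{eq:hF} and~\eqref{eq:equiv:norms} once more.

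Finally, for $\norm[\vel,\reac,T]{\sv}^2$, the reaction contribution $\tc^{-1}\norm[T]{\unv[T]}^2 = \tc^{-1}h_T^2\velc^{-2}\norm[T]{\GBT\sw}^2$ is absorbed into $h_T\velc^{-1}\norm[T]{\GBT\sw}^2$ using $h_T\tc^{-1}\lesssim\velc$, which follows from assumption~\eqref{eq:err.est.ass} and the definition~\eqref{eq:Lvel.tc.velc} of $\tc$. Each face stabilization term $\int_F\tfrac{\diff[F]}{h_F}\Aabs(\Pe_{TF})|\unv[T]|^2$ is then reduced, via~\eqref{eq:trace.disc}, to a multiple of $h_T\velc^{-1}\norm[T]{\GBT\sw}^2$ once the pointwise majoration $\tfrac{\diff[F]}{h_F}\Aabs(\Pe_{TF})\lesssim\velc$ is established. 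The main (mild) obstacle is to justify this latter bound uniformly across all regimes: in the nondegenerate case the Lipschitz continuity of $\Aabs$ at zero (Assumption~\ASSUM{A}{1}) gives $|\Aabs(\Pe_{TF})|\lesssim |\Pe_{TF}|$, hence $\tfrac{\diff[F]}{h_F}\Aabs(\Pe_{TF})\lesssim |\vel\SCAL\normal_{TF}|\le\velc$, while in the degenerate case $\diff[F]=0$ the convention~\eqref{eq:def:Apmdeg} directly yields $\tfrac{\diff[F]}{h_F}\Aabs(\Pe_{TF}) = |\vel\SCAL\normal_{TF}|$. Summing all the cellwise estimates over $T\in\Th$ then delivers~\eqref{eq:inf-sup.ah:norm.svh}.
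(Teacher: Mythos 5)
Your proof is correct and follows essentially the same route as the paper's: the vanishing of the face unknowns kills the boundary seminorms, the bound $\norm[T]{\GBT\sv}\lesssim\velc h_T^{-1}\norm[T]{\unv[T]}$ handles the advective-derivative term, the duality estimate $\diff[T]\velc^{-2}\norm[T]{\GBT\sw}^2\lesssim\norm[\diff,T]{\sw}^2$ (the paper's~\eqref{eq:tech.res:1}) closes the diffusive contribution, and the pointwise majoration $\tfrac{\diff[F]}{h_F}\Aabs(\Pe_{TF})\lesssim\velc$ together with $h_T\tc^{-1}\lesssim\velc$ from~\eqref{eq:err.est.ass} handles the advection-reaction terms, exactly as in the paper. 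The only cosmetic difference is your use of~\eqref{eq:GBT.bis} instead of~\eqref{eq:GBT} in the duality bound for $\GBT\sv$, which works equally well since the residual face terms are absorbed by the discrete trace inequality.
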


\begin{proof}
  Using~\eqref{eq:GBT.bis}, we observe that, for all $\sz\in\UT$, 
  \begin{equation}
    \label{eq:tech.res:1}
    \begin{aligned}
      \sqrt{\diff[T]}\norm[T]{\GBT\sz}
      &=\sup_{w\in\Poly{k}(T),\norm[T]{w}=1}\sqrt{\diff[T]}(\GBT\sz,w)_T
      \\
      &\lesssim\left\{\diff[T]
      \norm[T]{\vel\SCAL\GRAD\unz[T]}^2
      +\sum_{F\in\Fh[T]}\tfrac{\diff[T]}{h_F}\norm{|\vel\SCAL\normal_{TF}|(\unz[F]-\unz[T])}^2
      \right\}^{\nicefrac12}
      \lesssim\velc\norm[\diff,T]{\sz}.
    \end{aligned}
  \end{equation}
  The first inequality results from multiple applications of the Cauchy--Schwarz inequality together with the discrete trace inequality~\eqref{eq:trace.disc}
and the bound~\eqref{eq:Np} on $N_\partial$, while the second is an immediate consequence of definition
\eqref{eq:Lvel.tc.velc} of $\velc$ and of the equivalence \eqref{eq:equiv:norms}.
  \begin{asparaenum}[(i)]
  \item \emph{Diffusive contribution.}
    Recalling \eqref{eq:equiv:norms}, using the discrete inverse~\eqref{eq:inv} and trace~\eqref{eq:trace.disc} inequalities followed by~\eqref{eq:hF} to write $h_T/h_F\le\varrho^{-2}$ and the bound~\eqref{eq:Np} on $N_\partial$ for the boundary term, it is inferred that
    \begin{equation}
      \label{eq:tech.res:2}
      \begin{aligned}
        \norm[\diff,h]{\sv[h]}^2
        &\lesssim\sum_{T\in\Th}\left\{
        \diff[T]h_T^2\velc^{-2}\norm[T]{\GRAD\GBT\sw}^2
        +\sum_{F\in\Fh[T]}\diff[T]h_T\velc^{-2}\norm[F]{\GBT\sw}^2
        \right\}
        \\
        &\lesssim\sum_{T\in\Th}\diff[T]\velc^{-2}\norm[T]{\GBT\sw}^2
				\lesssim\norm[\diff,h]{\sw[h]}^2,
      \end{aligned}
    \end{equation}
    where, for all $T\in\Th$, we have used~\eqref{eq:tech.res:1} with $\sz=\sw$ to conclude.
  \item \emph{Advective and reactive contributions.}
		If $\diff[F]>0$ then, since $\Aabs$ is Lipschitz-continuous and vanishes at $0$,
\[
\tfrac{\diff[F]}{h_F}\Aabs(\Pe_{TF})\lesssim \tfrac{\diff[F]}{h_F}|\Pe_{TF}|=|\vel\SCAL\normal_{TF}|\le \velc.
\] 
Owing to \ASSUM{A}{3}, this inequality is also valid in the case $\diff[F]=0$. Hence, recalling definition~\eqref{eq:inf-sup.ah:svh} of $\sv[h]$ and using the discrete trace inequality~\eqref{eq:trace.disc},
    it is inferred, for all $T\in\Th$ and all $F\in\Fh[T]$,
    \begin{equation}
      \label{eq:tech.res:4}
      \norm[F]{\big[\tfrac{\diff[F]}{h_F}\Aabs(\Pe_{TF})\big]^{\nicefrac12}(\unv[F]-\unv[T])}
      = \norm[F]{\big[\tfrac{\diff[F]}{h_F}\Aabs(\Pe_{TF})\big]^{\nicefrac12}\unv[T]}
      \lesssim h_T^{\nicefrac12}\velc^{-\nicefrac12}\norm[T]{\GBT\sw}.
    \end{equation}
    Using~\eqref{eq:tech.res:4} together with the uniform bound~\eqref{eq:Np} on $N_\partial$
and the definition~\eqref{eq:inf-sup.ah:svh} of $\sv[h]$, we deduce that
    \begin{align}
      \norm[\vel,\mu,h]{\sv[h]}^2
      \lesssim&\sum_{T\in\Th}\left\{         
      h_T\velc^{-1}\norm[T]{\GBT\sw}^2
      + h_T^2 \tc^{-1}\velc^{-2}\norm[T]{\GBT\sw}^2
      \right\}
			\nonumber\\
      \lesssim&\sum_{T\in\Th} h_T\velc^{-1}\norm[T]{\GBT\sw}^2,
      \label{eq:tech.res:5}
    \end{align}
    where the conclusion follows by noticing that \eqref{eq:err.est.ass} yields $h_T\velc^{-1}\tc^{-1}\le 1$.
    Moreover, recalling \eqref{eq:GBT} and using the Cauchy--Schwarz and inverse~\eqref{eq:inv} inequalities together with definition~\eqref{eq:Lvel.tc.velc} of $\velc$ to infer $|(\unv[T],\vel\SCAL\GRAD w)_T|\le \norm[T]{\unv[T]}\velc C_{\rm inv} h_T^{-1}\norm[T]{w}$, one has, for all $T\in\Th$,
    \begin{equation}
      \label{eq:tech.res:6}
      \norm[T]{\GBT\sv}
        =\sup_{w\in\Poly{k}(T),\norm[T]{w}=1} -(\unv[T],\vel\SCAL\GRAD w)_T%
      \lesssim\velc h_T^{-1}\norm[T]{\unv[T]}
      =\norm[T]{\GBT\sw},
    \end{equation}
    where we have used the definition~\eqref{eq:inf-sup.ah:svh} of $\unv[T]$ to conclude.
    Hence, using~\eqref{eq:tech.res:5} and~\eqref{eq:tech.res:6}, we estimate the advective and reactive contributions to $\norm[\sharp,h]{\sv[h]}$ as follows:
    \begin{equation}
      \label{eq:tech.res:7}
      \norm[\vel,\reac,h]{\sv[h]}^2
      + \sum_{T\in\Th}h_T\velc^{-1}\norm[T]{\GBT\sv}^2
      \lesssim
      \sum_{T\in\Th}h_T\velc^{-1}\norm[T]{\GBT\sw}^2.
    \end{equation}
    The conclusion then follows from~\eqref{eq:norm.h} recalling~\eqref{eq:tech.res:2} and~\eqref{eq:tech.res:7}.
  \end{asparaenum}
\end{proof}

\subsection{Error analysis}
\label{sec:proof_err}

Here we prove Theorem \ref{thm:err.est}. Owing to~\eqref{eq:inf-sup.ah}, we infer that
  \begin{equation}
    \label{eq:abs.est}
    \norm[\sharp,h]{\shu[h]-\su[h]}
    \leq (\gamma \xi)^{-1}
\sup_{\sv[h]\in\Uh\setminus\{0\}}\frac{{\cal E}_h(\sv[h])}{\norm[\sharp,h]{\sv[h]}},
  \end{equation}
  where 
  $$
  {\cal E}_h(\sv[h])\eqbydef 
  a_h(\shu[h]-\su[h],\sv[h])
  =a_h(\shu[h],\sv[h])-l_h(\sv[h])
  =a_{\diff,h}(\shu[h],\sv[h]) + a_{\vel,\reac,h}(\shu[h],\sv[h]) - l_h(\sv[h])
  $$ 
  is the consistency error.   
  We derive a bound for this quantity for a generic $\sv[h]\in\Uh$ proceeding in the same spirit as~\cite[Theorem~8]{Di-Pietro.Ern.ea:14}.
  Recalling that $f=\DIV(-\diff\GRAD u + \vel u)+\mu u$ a.e. in $\Omega$, we perform an element-by-element integration by parts
on the first term in the definition~\eqref{eq:lh} of $l_h(\sv[h])$. We then use the conservation property 
$$
\restrto{(-\diff\GRAD u + \vel u)}{T_1}\SCAL\normal_{T_1F}
+\restrto{(-\diff\GRAD u + \vel u)}{T_2}\SCAL\normal_{T_2F}=0,
$$ 
which is valid for any interface $F\subset\partial T_1\cap\partial T_2$, to introduce $\unv[F]$ in the resulting sums.
We also notice that, for any face $F\in\Fhb$, $\frac{\diff[F]}{h_F}\Am(\Pe_F)g=\frac{\diff[F]}{h_F}\Am(\Pe_F)u$ on $F$,
which results from the boundary condition \eqref{eq:strong:BC} if $\diff[F]>0$ and from definition
\eqref{eq:def:Apmdeg} if $\diff[F]=0$. Letting $\cu\eqbydef\pT\shu[T]$ and
using definitions \eqref{eq:aT.nu} and \eqref{eq:ah.nu}
for $a_{\diff,h}$, and \eqref{eq:ah.vel.mu:orig} and \eqref{eq:GBT.bis} for $a_{\vel,\reac,h}$,
we then find
  \begin{equation}
    \label{eq:right}
    \begin{aligned}
      &\mathcal{E}_h(\sv[h])=\\
      &
      \sum_{T\in\Th}\left\{
      (\diff[T]\GRAD (\cu-u),\GRAD\unv[T])_T 
      + \sum_{F\in\Fh[T]}(\diff[T]\GRAD (\cu- \restrto{u}{T})\SCAL\normal_{TF},\unv[F]-\unv[T])_F
       + s_{\diff,T}(\shu[h],\sv[h])\right\}
      \\
      &
      {+} \sum_{T\in\Th} \left\{
      (u-\unhu[T], \vel\SCAL\GRAD\unv[T]+\mu\unv[T])_T + \sum_{F\in\Fh[T]}((\vel\SCAL\normal_{TF})(\restrto{u}{T}-\unhu[T]),\unv[F]-\unv[T])_F
      {+} s_{\vel,T}^-(\shu[h],\sv[h])\right\}
      \\
      & + \sum_{F\in\Fhb}\left\{
      (\diff[F](\GRAD (u - \cu)\SCAL\normal_{TF}, \unv[F])_F
      +\tfrac{\diff[F]}{h_F}(\Ap(\Pe_{F})(\unhu[F]-u),\unv[F])_F
      \right\}.
    \end{aligned}
  \end{equation}
  We have used the fact that $\sum_{F\in\Fhb}\frac{\varsigma\diff[F]}{h_F}(\unhu[F]-g,\unv[F])_F=0$.
Indeed, for all $F\in\Fhb$,
either $\diff[F]=0$ and the corresponding addend vanishes, or $\diff[F]>0$ so that $F\subset\Gam$ (cf.~\eqref{eq:Gamma}) and hence $\unhu[F]=\lproj[F]{k}g$ owing to~\eqref{eq:strong:BC} and $(\unhu[F]-g,\unv[F])_F=(\lproj[F]{k}g-g,\unv[F])_F=0$ since $\unv[F]\in\Poly[d-1]{k}(F)$.

  Denote by $\term_1$, $\term_2$, $\term_3$ the lines composing the right-hand side of~\eqref{eq:right} and corresponding, respectively, to diffusive terms, advective terms, and weakly enforced boundary conditions.\\
  (i) \emph{Diffusive terms.}
  Proceeding as in the proof of~\cite[Theorem~8]{Di-Pietro.Ern.ea:14} yields 
  \begin{equation}
    \label{eq:err.est:T1}
    |\term_1|
    \lesssim 
    \left\{\sum_{T\in\Th}
    \diff[T]h_T^{2(k+1)}\norm[H^{k+2}(T)]{u}^2
    \right\}^{\nicefrac12}\norm[\diff,h]{\sv[h]}.
  \end{equation}
  Observe that, to obtain~\eqref{eq:err.est:T1}, a crucial point is the choice of interpolating $\unhu[F]$ from the diffusive side whenever $F\subset\Int[-]$ since this guarantees that $\cu$ enjoys the approximation properties~\eqref{eq:rT.approx} whenever $\diff[T]\neq 0$.
  \\
  (ii) \emph{Advective-reactive terms.}
  Denote by $\term_{2,1}$, $\term_{2,2}$, and $\term_{2,3}$ the three addends that compose $\term_2$.
  For the first term, observing that $(\lproj[T]{0}\vel)\SCAL\GRAD\unv[T]\in\Poly{k-1}(T)\subset\Poly{k}(T)$ and recalling that, owing to~\eqref{eq:Ih}, $\unhu[T]=\lproj[T]{k}u$, we infer that
  $
  \term_{2,1}= \sum_{T\in\Th}(u-\lproj[T]{k}u,(\vel-\lproj[T]{0}\vel)\SCAL\GRAD\unv[T]+\reac\unv[T])_T.
  $
  Hence, 
  \begin{equation}
  \begin{aligned}
    \label{eq:err.est:T21}
    |\term_{2,1}|
    &\lesssim
    \sum_{T\in\Th}\left\{
    \norm[L^{\infty}(T)^d]{\vel-\lproj[T]{0}\vel}
    \norm[T]{u-\lproj[T]{k}u}
    \norm[T]{\GRAD\unv[T]} + \norm[L^\infty(T)]{\mu}\norm[T]{u-\lproj[T]{k}u}\norm[T]{\unv[T]}\right\}
    \\
    &\lesssim
    \left\{\sum_{T\in\Th}
    \tc^{-1} h_T^{2(k+1)}\norm[H^{k+1}(T)]{u}^2
    \right\}^{\nicefrac12}\norm[\vel,\reac,h]{\sv[h]},
  \end{aligned}
  \end{equation}
  where the second inequality is obtained using the fact that $\vel$ is Lipschitz continuous to infer $\norm[L^\infty(T)^d]{\vel-\lproj[T]{0}\vel}\le \Lvel h_T$ followed by the inverse inequality~\eqref{eq:inv} together with the definition~\eqref{eq:Lvel.tc.velc} of $\tc$.

  To treat $\term_{2,2}$ and $\term_{2,3}$, we proceed differently according to the value of the local P\'eclet number. 
  We write $\term_{2,2}=\term_{2,2}^{\rm d}+\term_{2,2}^{\rm a}$ and $\term_{2,3}=\term_{2,3}^{\rm d}+\term_{2,3}^{\rm a}$, where the superscript ``d'' corresponds to integrals where $|\Pe_{TF}|\le 1$, while the superscript ``a'' corresponds to integrals where $|\Pe_{TF}|>1$ (which conventionally include all faces where $\diff[F]=0$).
  We denote by $\cf_{|\Pe_{TF}|\le 1}$ and $\cf_{|\Pe_{TF}|> 1}$ the two characteristic functions of these regions. The idea is that we use the diffusive norm of $\sv[h]$ if $|\Pe_{TF}|\le 1$, whereas we use the advective norm if $|\Pe_{TF}|>1$. Before proceeding, we observe that, for all $T\in\Th$ and all $F\in\Fh$, the following holds:
    \begin{equation}
      \label{eq:est.bnd.term}
      \norm[F]{\unhu[F]-\unhu[T]}
      =\norm[F]{\lproj[F]{k}(\restrto{u}{\Tdiff}-\unhu[T])}
      \le\norm[F]{\restrto{u}{\Tdiff}-\unhu[T]},
    \end{equation}
    where we have used that $\unhu[F]=\lproj[F]{k}\restrto{u}{\Tdiff}$ (see~\eqref{eq:Ih}) $\restrto{\unhu[T]}{F}\in\Poly[d-1]{k}(F)$, and that $\lproj[F]{k}$ is a projector. For $\term_{2,2}^{\rm d}$, it is also useful to notice that, since $\Am(0)=0$ and $\Am$ is Lipschitz-continuous,
  \begin{equation}\label{esterr:Am}
    \left|\tfrac{\diff[F]}{h_F}\Am(\Pe_{TF})\right|\lesssim\tfrac{\diff[F]}{h_F}|\Pe_{TF}|
    =|\vel\SCAL\normal_{TF}|\le\velc
  \end{equation}
  whenever $\diff[F]>0$ (which is always the case if $\cf_{|\Pe_{TF}|\le 1}\not\equiv 0$).
  Hence, observing that $\diff[F]>0$ indicates that the exact solution $u$ does not jump across $F$, so that we can simply write $\restrto{u}{T}$ in place of $\restrto{u}{\Tdiff}$,
    \begin{equation}
      \label{est:T22T23d}
      \begin{aligned}
       &|\term_{2,2}^{\rm d}|+|\term_{2,3}^{\rm d}| \\
        &\quad\lesssim \sum_{T\in\Th} \sum_{F\in\Fh[T]}(|\vel\SCAL\normal_{TF}|\cf_{|\Pe_{TF}|\le 1}\,|\restrto{u}{T}-\unhu[T]|,
        |\unv[F]-\unv[T]|)_F
        \\
        &\quad\qquad+\sum_{T\in\Th} \sum_{F\in\Fh[T]}\tfrac{\diff[F]}{h_F}(|\Pe_{TF}|\cf_{|\Pe_{TF}|\le 1}\,|\unhu[F]-\unhu[T]|,
        |\unv[F]-\unv[T]|)_F
        \\
        &\quad
        \lesssim \left\{ \sum_{T\in\Th} \sum_{F\in\Fh[T]} \tfrac{\diff[F]}{h_F}\norm[L^\infty(F)]{\Pe_{TF}\cf_{|\Pe_{TF}|\le 1}}^2
        \norm[F]{\restrto{u}{T}-\unhu[T]}^2\right\}^{\nicefrac12}
        \times\left\{ \sum_{T\in\Th} \sum_{F\in\Fh[T]} \tfrac{\diff[F]}{h_F}
        \norm[F]{\unv[F]-\unv[T]}^2\right\}^{\nicefrac12}
        \\
        &\quad
        \lesssim\left\{ \sum_{T\in\Th} \sum_{F\in\Fh[T]} \velc \min(1,\Pe_T)
        \norm[F]{\restrto{u}{T}-\unhu[T]}^2\right\}^{\nicefrac12}
        \norm[\diff,h]{\sv[h]},
      \end{aligned}
    \end{equation}%
where we have used~\eqref{eq:est.bnd.term} and~\eqref{esterr:Am} to bound the second addend in the first line and the norm equivalence \eqref{eq:equiv:norms} to conclude.
To estimate $\term_{2,2}^{\rm a}$, it suffices to observe that
\begin{equation}
  \begin{aligned}
    \label{est:T22a}
    |\term_{2,2}^{\rm a}|
    &\lesssim \left\{ \sum_{T\in\Th} \sum_{F\in\Fh[T]}
    \norm[L^\infty(F)]{\vel\SCAL\normal_{TF}\cf_{|\Pe_{TF}|>1}}
    \norm[F]{\restrto{u}{T}-\unhu[T]}^2\right\}^{\nicefrac12}
    \\
    &\qquad
    \times\left\{ \sum_{T\in\Th} \sum_{F\in\Fh[T]} 
    \norm[F]{|\vel\SCAL\normal_{TF}\cf_{|\Pe_{TF}|>1}|^{\nicefrac12}(\unv[F]-\unv[T])}^2
    \right\}^{\nicefrac12}
    \\
    &\lesssim\left\{ \sum_{T\in\Th} \sum_{F\in\Fh[T]} \velc \min(1,\Pe_T)
    \norm[F]{\restrto{u}{T}-\unhu[T]}^2\right\}^{\nicefrac12}
    \norm[\vel,\reac,h]{\sv[h]},
  \end{aligned}
\end{equation}%
where the introduction of the advective norm in the last inequality is justified since, owing to \eqref{eq:Ap-Am=Id} (see also \eqref{eq:def:Apmdeg} if $\diff[F]=0$)
and Assumption \ASSUM{A}{2}, 
\begin{equation}\label{eq:estvel:Aabs}
|\vel\SCAL\normal_{TF}|\cf_{|\Pe_{TF}|>1}\lesssim\tfrac{\diff[F]}{h_F}\Aabs(\Pe_{TF}).
\end{equation}
To estimate $\term_{2,3}^{\rm a}$, recalling~\eqref{eq:est.bnd.term} we observe that
$$
|\term_{2,3}^{\rm a}|\le
\sum_{T\in\Th}\sum_{F\in\Fh[T]}
(|\tfrac{\diff[F]}{h_F}\Am(\Pe_{TF})|\cf_{|\Pe_{TF}|>1} |\lproj[F]{k}(\restrto{u}{\Tdiff}-\unhu[T])|, |\unv[F]-\unv[T]|)_F.
$$
For given $T\in\Th$ and $F\in\Fh[T]$, we have the following mutually exclusive cases:
\begin{inparaenum}[(i)]
\item $\diff[F]>0$ or ($\diff[F]=0$ and $F\subset \Int[+]$), in which case
$\restrto{u}{\Tdiff}=\restrto{u}{T}$ since $u$ does not have a jump at $F$
(see \eqref{eq:match.u} if $F\subset \Int[+]$);
\item $\diff[F]=0$ and $F\subset \Int[-]$, in which case, 
recalling \eqref{eq:def:Apmdeg}, $\frac{\diff[F]}{h_F}\Am(\Pe_{TF})=
(\vel\SCAL\normal_{TF})^-=0$.
\end{inparaenum}
Hence, in any case, $|\frac{\diff[F]}{h_F}\Am(\Pe_{TF})| |\lproj[F]{k}(\restrto{u}{\Tdiff}-\unhu[T])|
=|\frac{\diff[F]}{h_F}\Am(\Pe_{TF})| |\lproj[F]{k}(\restrto{u}{T}-\unhu[T])|$.
Using this fact and observing that, for all $T\in\Th$ and all $F\in\Fh[T]$, $|\frac{\diff[F]}{h_F}\Am(\Pe_{TF})|\lesssim|\vel\SCAL\normal_{TF}|\lesssim\velc$, we infer the estimate
\begin{equation}
  \label{eq:T23a}
  |\term_{2,3}^{\rm a}|\lesssim
  \left\{ \sum_{T\in\Th} \sum_{F\in\Fh[T]} \velc \min(1,\Pe_T)
  \norm[F]{\restrto{u}{T}-\unhu[T]}^2\right\}^{\nicefrac12}\norm[\vel,\reac,h]{\sv[h]}.
\end{equation}
To conclude the estimate on $\term_{2,2}$ and $\term_{2,3}$, we collect the bounds \eqref{est:T22T23d},~\eqref{est:T22a}, and~\eqref{eq:T23a}, and invoke \eqref{eq:approx.lproj.T} to write $\norm[F]{\restrto{u}{T}-\unhu[T]} \le C_{\rm app} h_T^{k+\nicefrac12} \seminorm[H^{k+1}(T)]{u}$, so that
\begin{equation}
  \label{eq:err.est:T22T23}
  |\term_{2,2}|+|\term_{2,3}|\lesssim \left\{
  \sum_{T\in\Th} \velc  \min(1,\Pe_{T}) h_T^{2k+1} \norm[H^{k+1}(T)]{u}^2\right\}^{\nicefrac12}\norm[\sharp,h]{\sv[h]}.
\end{equation}
\\
(iii) \emph{Weakly enforced boundary conditions.}
Let us now estimate $\term_3$. Denoting by $\term_{3,1}$ and $\term_{3,2}$ the two
addends in $\term_3$, the estimate of $\term_{3,1}$ is a straightforward consequence
of the Cauchy-Schwarz inequality, the definition \eqref{eq:norm.nu} of $\norm[\diff,h]{{\cdot}}$,
and the approximation property \eqref{eq:rT.approx} of $\cu=\pT\shu$:
\begin{equation}
    \label{est:T31}
    |\term_{3,1}|\le \left\{
    \sum_{F\in\Fhb} \diff[F]h_F\norm[F]{\GRAD(u-\cu[T(F)])}^2
    \right\}^{\nicefrac12}
    \norm[\diff,h]{\sv[h]}
    \le \left\{\sum_{F\in\Fhb} \diff[F]h_{T(F)}^{2(k+2)}\norm[H^{k+2}(T(F))]{u}^2\right\}^{\nicefrac12}
    \norm[\sharp,h]{\sv[h]}.
\end{equation}
To estimate $\term_{3,2}$, we apply ideas similar to those employed for bounding $\term_{2,2}$.
We first observe that, for all $F\in\Fhb$,
\begin{equation}\label{est:T22T23u2}
  \norm[F]{u-\unhu[F]} \lesssim h_T^{k+1/2} \seminorm[H^{k+1}(T)]{u}.
\end{equation}%
Since $|\frac{\diff[F]}{h_F}\Ap(\Pe_{TF})|\lesssim
|\vel\SCAL\normal_{TF}|$ (proved as for $\Am$ above) and
$|\Ap(\Pe_{TF})|\lesssim |\Pe_{TF}|$ whenever $\diff[F]>0$,
invoking the definitions \eqref{eq:norm.nu} and \eqref{eq:norm.vel.mu} of the diffusive and
advective norms and reasoning as in the estimates of $\term_{2,2}^{\rm d}$
and $\term_{2,2}^{\rm a}$, estimate \eqref{eq:estvel:Aabs} and the approximation property 
\eqref{est:T22T23u2} yield
\begin{equation}
  \label{est:T32}
  \begin{aligned}    
    |\term_{3,2}|\lesssim&\sum_{F\in\Fhb}
    \tfrac{\diff[F]}{h_F}(|\Pe_{F}|\cf_{|\Pe_{F}|\le 1}\,|\unhu[F]-u|,|\unv[F]|)_F+
    \sum_{F\in\Fhb}(|\vel\SCAL\normal_{TF}|\cf_{|\Pe_{F}|> 1}\,|\unhu[F]-u|,|\unv[F]|)_F
    \\
    \lesssim&\left\{\sum_{F\in\Fhb}
    \velc \norm[L^\infty(F)]{\Pe_{F}\cf_{|\Pe_{F}|\le 1}} h_T^{2k+1}\norm[H^{k+1}(T)]{u}^2
    \right\}^{\nicefrac12}
    \norm[\diff,h]{\sv[h]}
    \\
    &+\left\{
    \sum_{F\in\Fhb}
    \norm[L^\infty(F)]{\vel\SCAL\normal_{TF}\cf_{|\Pe_{F}|> 1}}h_T^{2k+1}\norm[H^{k+1}(T)]{u}^2
    \right\}^{\nicefrac12}
    \norm[\vel,\reac,h]{\sv[h]}
    \\
    \lesssim&\left\{
    \sum_{F\in\Fhb,\,F\subset\partial T}
    \velc \min(1,\Pe_T)h_T^{2k+1}\norm[H^{k+1}(T)]{u}^2
    \right\}^{\nicefrac12}
    \norm[\sharp,h]{\sv[h]}.
  \end{aligned}
\end{equation}
The proof is completed by plugging estimates \eqref{eq:err.est:T1}, \eqref{eq:err.est:T21},
\eqref{eq:err.est:T22T23}, \eqref{est:T31}, and \eqref{est:T32}
into \eqref{eq:right}, and using the resulting bound to estimate the right-hand side of~\eqref{eq:abs.est}.


\footnotesize
\bibliographystyle{plain}
\bibliography{adho}

\begin{thebibliography}{10}

\bibitem{Bassi.Rebay.ea:97}
F.~Bassi, S.~Rebay, G.~Mariotti, S.~Pedinotti, and M.~Savini.
\newblock A high-order accurate discontinuous finite element method for
  inviscid and viscous turbomachinery flows.
\newblock In R.~Decuypere and G.~Dibelius, editors, {\em Proceedings of the
  $2^{\mathrm{nd}}$ European Conference on Turbomachinery Fluid Dynamics and
  Thermodynamics}, pages 99--109, 1997.

\bibitem{Beirao-da-Veiga.Droniou.ea:10}
L.~Beir\~{a}o~da Veiga, J.~Droniou, and M.~Manzini.
\newblock A unified approach for handling convection terms in finite volumes
  and mimetic discretization methods for elliptic problems.
\newblock {\em IMA J. Numer. Anal.}, 31(4):1357--1401, 2010.

\bibitem{Brezzi.Lipnikov.ea:05}
F.~Brezzi, K.~Lipnikov, and M.~Shashkov.
\newblock Convergence of the mimetic finite difference method for diffusion
  problems on polyhedral meshes.
\newblock {\em SIAM J. Numer. Anal.}, 43(5):1872--1896, 2005.

\bibitem{Chainais.Droniou:11}
C.~Chainais-Hillairet and J.~Droniou.
\newblock Finite-volume schemes for noncoercive elliptic problems with
  {N}eumann boundary conditions.
\newblock {\em IMA J. Numer. Anal.}, 31(1):61--85, 2011.

\bibitem{Chen.Cockburn:14}
Y.~Chen and B.~Cockburn.
\newblock Analysis of variable-degree {HDG} methods for convection-diffusion
  equations. {P}art {II}: {S}emimatching nonconforming meshes.
\newblock {\em Math. Comp.}, 83(285):87--111, 2014.

\bibitem{Cockburn.Dong.ea:09}
B.~Cockburn, B.~Dong, J.~Guzm\'{a}n, M.~Restelli, and R.~Sacco.
\newblock A hybridizable discontinuous {Galerkin} method for steady-state
  convection-diffusion-reaction problems.
\newblock {\em SIAM J. Sci. Comput.}, 31(5):3827--3846, 2009.

\bibitem{Di-Pietro.Ern:12}
D.~A. Di~Pietro and A.~Ern.
\newblock {\em Mathematical {A}spects of {D}iscontinuous {G}alerkin {M}ethods},
  volume~69 of {\em Math. Appl.}
\newblock Springer-Verlag, Berlin, 2012.

\bibitem{Di-Pietro.Ern:14a}
D.~A. Di~Pietro and A.~Ern.
\newblock A hybrid high-order locking-free method for linear elasticity on
  general meshes.
\newblock {\em Comput. Methods Appl. Mech. Engrg.}, 283:1--21, 2015.

\bibitem{Di-Pietro.Ern:14b}
D.~A. Di~Pietro and A.~Ern.
\newblock Hybrid high-order methods for variable-diffusion problems on general
  meshes.
\newblock {\em C. R. Math. Acad. Sci Paris}, 353:31--34, 2015.

\bibitem{Di-Pietro.Ern.ea:08}
D.~A. Di~Pietro, A.~Ern, and J.-L. Guermond.
\newblock Discontinuous {G}alerkin methods for anisotropic semidefinite
  diffusion with advection.
\newblock {\em SIAM J. Numer. Anal.}, 46(2):805--831, 2008.

\bibitem{Di-Pietro.Ern.ea:14}
D.~A. Di~Pietro, A.~Ern, and S.~Lemaire.
\newblock An arbitrary-order and compact-stencil discretization of diffusion on
  general meshes based on local reconstruction operators.
\newblock {\em Comput. Methods Appl. Math.}, 14(4):461--472, 2014.

\bibitem{Di-Pietro.Lemaire:14}
D.~A. Di~Pietro and S.~Lemaire.
\newblock An extension of the {Crouzeix--Raviart} space to general meshes with
  application to quasi-incompressible linear elasticity and {Stokes} flow.
\newblock {\em Math. Comp.}, 84(291):1--31, 2015.

\bibitem{Droniou:02}
J.~Droniou.
\newblock Non-coercive linear elliptic problems.
\newblock {\em Potential Anal.}, 17(2):181--203, 2002.

\bibitem{Droniou:10}
J.~Droniou.
\newblock Remarks on discretizations of convection terms in hybrid mimetic
  mixed methods.
\newblock {\em Netw. Heterog. Media}, 5(3):545--563, 2010.

\bibitem{Droniou.Eymard:06}
J.~Droniou and R.~Eymard.
\newblock A mixed finite volume scheme for anisotropic diffusion problems on
  any grid.
\newblock {\em Numer. Math.}, 105:35--71, 2006.

\bibitem{Droniou.Eymard.ea:10}
J.~Droniou, R.~Eymard, T.~Gallou\"{e}t, and R.~Herbin.
\newblock A unified approach to mimetic finite difference, hybrid finite volume
  and mixed finite volume methods.
\newblock {\em Math. Models Methods Appl. Sci.}, 20(2):265--295, 2010.

\bibitem{Dupont.Scott:80}
T.~Dupont and R.~Scott.
\newblock Polynomial approximation of functions in {S}obolev spaces.
\newblock {\em Math. Comp.}, 34(150):441--463, 1980.

\bibitem{Ern.Proft:06}
A.~Ern and J.~Proft.
\newblock Multi-algorithmic methods for coupled hyperbolic-parabolic problems.
\newblock {\em Int. J. Numer. Anal. Model.}, 3(1):94--114, 2006.

\bibitem{Eymard.Gallouet.ea:10}
R.~Eymard, T.~Gallou{\"e}t, and R.~Herbin.
\newblock Discretization of heterogeneous and anisotropic diffusion problems on
  general nonconforming meshes. {SUSHI}: {A} scheme using stabilization and
  hybrid interfaces.
\newblock {\em IMA J. Numer. Anal.}, 30(4):1009--1043, 2010.

\bibitem{Gastaldi.Quarteroni:89}
F.~Gastaldi and A.~Quarteroni.
\newblock On the coupling of hyperbolic and parabolic systems: {A}nalytical and
  numerical approach.
\newblock {\em Appl. Numer. Math.}, 6:3--31, 1989/90.

\bibitem{Herbin.Hubert:08}
R.~Herbin and F.~Hubert.
\newblock Benchmark on discretization schemes for anisotropic diffusion
  problems on general grids.
\newblock In R.~Eymard and J.-M. H\'{e}rard, editors, {\em Finite Volumes for
  Complex Applications V}, pages 659--692. John Wiley \& Sons, New York, 2008.

\bibitem{Houston.Schwab.ea:02}
P.~Houston, C.~Schwab, and E.~S\"uli.
\newblock Discontinuous $hp$-finite element methods for
  advection-diffusion-reaction problems.
\newblock {\em SIAM J. Numer. Anal.}, 39(6):2133--2163, 2002.

\bibitem{Johnson.Pitkaranta:86}
C.~Johnson and J.~Pitk{\"a}ranta.
\newblock An analysis of the discontinuous {G}alerkin method for a scalar
  hyperbolic equation.
\newblock {\em Math. Comp.}, 46(173):1--26, 1986.

\bibitem{Wang.Ye:13}
J.~Wang and X.~Ye.
\newblock A weak {Galerkin} element method for second-order elliptic problems.
\newblock {\em J. Comput. Appl. Math.}, 241:103--115, 2013.

\end{thebibliography}

\end{document}